\newtheorem{theorem}{\bf Theorem}[section]
\newtheorem{lemma}[theorem]{\bf Lemma}
\newtheorem{remark}[theorem]{\bf Remark}
\numberwithin{equation}{section}
\begin{document}
\vspace*{0ex}
\begin{center}
{\Large\bf
A Hamiltonian structure of the Isobe--Kakinuma model \\[0.5ex] for water waves}
\end{center}

\begin{center}
{\em Dedicated to the late Professor Walter L. Craig}
\end{center}

\begin{center}
Vincent Duch\^ene and Tatsuo Iguchi
\end{center}

\begin{abstract}
We consider the Isobe--Kakinuma model for water waves, which is obtained as the system 
of Euler--Lagrange equations for a Lagrangian approximating Luke's Lagrangian for water waves. 
We show that the Isobe--Kakinuma model also enjoys a Hamiltonian structure analogous to
the one exhibited by V. E. Zakharov on the full water wave problem 
and, moreover,  that the Hamiltonian of the Isobe--Kakinuma model is a higher order 
shallow water approximation to the one of the full water wave problem. 
\end{abstract}

\section{Introduction}
\label{section:intro}
We consider a model for the motion of water in a moving domain of the $(n+1)$-dimensional 
Euclidean space. 
The water wave problem is mathematically formulated as a free boundary problem for 
an irrotational flow of an inviscid, incompressible, and homogeneous fluid under a vertical gravitational field. 
Let $t$ be the time, $\boldsymbol{x}=(x_1,\ldots,x_n)$ the horizontal spatial coordinates, 
and $z$ the vertical spatial coordinate.
We assume that the water surface and the bottom are represented as $z=\eta({\boldsymbol x},t)$ and 
$z=-h+b({\boldsymbol x})$, respectively, where $\eta({\boldsymbol x},t)$ is the surface elevation, 
$h$ is the mean depth, and $b({\boldsymbol x})$ represents the bottom topography. 
We denote by $\Omega(t)$, $\Gamma(t)$, and $\Sigma$ the water region, the water surface, and the 
bottom of the water at time $t$, respectively. 
Then, the motion of the water is described by the velocity potential $\Phi({\boldsymbol x},z,t)$ satisfying 
Laplace's equation 
\begin{equation}\label{intro:Laplace}
\Delta\Phi + \partial_z^2\Phi = 0 \makebox[3em]{in} \Omega(t), \; t>0, 
\end{equation}
where $\Delta=\partial_{x_1}^2+\cdots+\partial_{x_n}^2$. 
The boundary conditions on the water surface are given by 
\begin{equation}\label{intro:BC1}
\begin{cases}
 \partial_t\eta + \nabla\Phi\cdot\nabla\eta - \partial_z\Phi = 0 & \mbox{on}\quad \Gamma(t), \; t>0, \\
 \displaystyle
 \partial_t\Phi +\frac12 \bigl( |\nabla\Phi|^2 + (\partial_z\Phi)^2 \bigr) + g\eta = 0
  & \mbox{on}\quad \Gamma(t), \; t>0,
\end{cases}
\end{equation}
where $\nabla=(\partial_{x_1},\ldots,\partial_{x_n})^{\rm T}$, and $g$ is the gravitational constant. 
The first equation is the kinematic condition on the water surface and the second one is Bernoulli's equation. 
Finally, the boundary condition on the bottom of the water is given by 
\begin{equation}\label{intro:BC2}
\nabla\Phi\cdot\nabla b - \partial_z\Phi = 0 \makebox[3em]{on} \Sigma, \; t>0,
\end{equation}
which is the kinematic condition on the fixed and impermable bottom.
These are the basic equations for the water wave problem.

We put 
\begin{equation}\label{intro:CV}
\phi({\boldsymbol x},t) = \Phi({\boldsymbol x},\eta({\boldsymbol x},t),t),
\end{equation}
which is the trace of the velocity potential on the water surface. 
Then, the basic equations for water waves~\eqref{intro:Laplace}--\eqref{intro:BC2} are transformed 
equivalently into 
\begin{equation}\label{intro:ZCS}
\begin{cases}
 \partial_t\eta-\Lambda(\eta,b)\phi = 0  & \mbox{on}\quad \mathbf{R}^n, \; t>0, \\[.5ex]
 \partial_t\phi + g\eta + \dfrac12|\nabla\phi|^2
   - \dfrac12\dfrac{\bigl(\Lambda(\eta,b)\phi + \nabla\eta\cdot\nabla\phi\bigr)^2}{1+|\nabla\eta|^2} = 0
    & \mbox{on}\quad \mathbf{R}^n, \; t>0,
\end{cases}
\end{equation}
where $\Lambda(\eta,b)$ is the Dirichlet-to-Neumann map for Laplace's equation. 
Namely, it is defined by
\[
\Lambda(\eta,b)\phi= (\partial_z\Phi)|_{z=\eta} - \nabla\eta\cdot(\nabla\Phi)|_{z=\eta},
\]
where $\Phi$ is the unique solution to the boundary value problem of Laplace's equation 
\eqref{intro:Laplace} under the boundary conditions \eqref{intro:BC2}--\eqref{intro:CV}.

As is well-known, the water wave problem has a conserved energy $E=E_{\rm kin} + E_{\rm pot}$, 
where $E_{\rm kin}$ is the kinetic energy 
\begin{align*}
E_{\rm kin} 
&= \frac{1}{2}\rho\iint_{\Omega(t)} \bigl( |\nabla\Phi({\boldsymbol x},z,t)|^2 
 +  (\partial_z\Phi({\boldsymbol x},z,t))^2  \bigr)\,{\rm d}{\boldsymbol x}{\rm d}z \\
&= \frac{1}{2}\rho\int_{\mathbf{R}^n}\phi({\boldsymbol x},t)(\Lambda(\eta,b)\phi)({\boldsymbol x},t)\,{\rm d}{\boldsymbol x},
\end{align*}
and $E_{\rm pot}$ is the potential energy 
\[
E_{\rm pot} = \frac{1}{2}\rho g\int_{\mathbf{R}^n}\eta({\boldsymbol x},t)^2\,{\rm d}{\boldsymbol x}
\]
due to the gravity. Here, $\rho$ is a constant density of the water.

V. E. Zakharov~\cite{Zakharov1968} found that the water wave system has a Hamiltonian structure and 
$\eta$ and $\phi$ are the canonical variables. 
The Hamiltonian $\mathscr{H}$ is essentially the total energy, that is, $\mathscr{H} = \frac{1}{\rho}E$. 
He showed that the basic equations for water waves \eqref{intro:Laplace}--\eqref{intro:BC2} are transformed 
equivalently into Hamilton's canonical equations
\[
\partial_t\eta = \frac{\delta\mathscr{H}}{\delta\phi}, \quad
\partial_t\phi = -\frac{\delta\mathscr{H}}{\delta\eta}.
\]
Although V. E. Zakharov did not use explicitly the Dirichlet-to-Neumann map $\Lambda(\eta,b)$, 
the above canonical equations are exactly the same as \eqref{intro:ZCS}. 
W. Craig and C. Sulem~\cite{CraigSulem1993} introduced the Dirichlet-to-Neumann map explicitly 
and derived \eqref{intro:ZCS}. 
Therefore, nowadays \eqref{intro:ZCS} is often called the Zakharov--Craig--Sulem 
formulation of the water wave problem. 
Since then, W. Craig and his collaborators \cite{CraigGroves1994,CraigGroves2000, CraigGuyenneKalisch2005, 
CraigGuyenneNichollsSulem2005, CraigGuyenneSulem2010, CraigGuyenneSulem2012} have used the Hamiltonian 
structure of the water wave problem in order to analyze long-wave and modulation approximations. 
Let us also mention the recent work of W. Craig~\cite{Craig2016}, which generalizes the Hamiltonian 
formulation of water waves described above to a general coordinatization of the free surface 
allowing overturning wave profiles.

On the other hand, as was shown by J. C. Luke~\cite{Luke1967}, 
the water wave problem has also a variational structure. 
His Lagrangian density is of the form 
\begin{equation}\label{intro:Luke's Lagrangian}
\mathscr{L}(\Phi,\eta) = \int_{-h+b({\boldsymbol x})}^{\eta({\boldsymbol x},t)}\biggl(
 \partial_t\Phi({\boldsymbol x},z,t) + \frac12\bigl( |\nabla\Phi({\boldsymbol x},z,t)|^2
  +  (\partial_z\Phi({\boldsymbol x},z,t))^2 \bigr) \biggr)\,{\rm d}z
 + \frac12g\bigl(\eta(\boldsymbol{x},t)\bigr)^2 
 \end{equation}
and the action function is given by 
\[
\mathscr{J}(\Phi,\eta)
 = \int_{t_0}^{t_1}\!\!\!\int_{\mathbf{R}^n}\mathscr{L}(\Phi,\eta)\,{\rm d}{\boldsymbol x}\,{\rm d}t.
\]
In fact, the corresponding Euler--Lagrange equations are exactly the basic equations for water waves 
\eqref{intro:Laplace}--\eqref{intro:BC2}. 
We refer to J. W. Miles~\cite{Miles1977} for the relation between Zakharov's Hamiltonian and Luke's Lagrangian.

M. Isobe~\cite{Isobe1994, Isobe1994-2} and T. Kakinuma~\cite{Kakinuma2000, Kakinuma2001, Kakinuma2003} 
obtained a family of systems of equations after replacing the velocity potential $\Phi$ in Luke's Lagrangian by 
\[
\Phi^{\rm app}({\boldsymbol x},z,t) = \sum_{i=0}^N\Psi_i(z;b)\phi_i({\boldsymbol x},t),
\]
where $\{\Psi_i\}$ is a given appropriate function system in the vertical coordinate $z$ and may depend on 
the bottom topography $b$ and $(\phi_0,\phi_1,\ldots,\phi_N)$ are unknown variables. 
The Isobe--Kakinuma model is a system of Euler--Lagrange equations corresponding to the action function 
\begin{equation}\label{intro:IK-action}
\mathscr{J}^{\rm app}(\phi_0,\phi_1,\ldots,\phi_N,\eta)
 = \int_{t_0}^{t_1}\!\!\!\int_{\mathbf{R}^n}\mathscr{L}(\Phi^{\rm app},\eta)
  \,{\rm d}{\boldsymbol x}{\rm d}t.
\end{equation}
We have to choose the function system $\{\Psi_i\}$ carefully for the Isobe--Kakinuma model 
to produce good approximations to the water wave problem. 
One possible choice is the bases of the Taylor series of the velocity potential 
$\Phi({\boldsymbol x},z,t)$ with respect to the vertical spatial coordinate $z$ around the bottom. 
Such an expansion has been already used by J. Boussinesq~\cite{Boussinesq1872} in the case of the flat bottom 
and, for instance, by C. C. Mei and B. Le M\'ehaut\'e~\cite{MeiLeMehaute1966} for general bottom topographies. 
The corresponding choice of the function system is given by 
\[
\Psi_i(z;b) 
 = 
\begin{cases}
 (z + h)^{2i} & \mbox{in the case of the flat bottom}, \\
 (z + h - b({\boldsymbol x}))^i & \mbox{in the case of the variable bottom}.
 \end{cases}
\]
Here we note that the latter choice is valid also for the case of the flat bottom. 
However, it turns out that the terms of odd degree do not play any important role in such a case 
so that the former choice is more adequate. 
In order to treat both cases at the same time, we adopt the approximation 
\begin{equation}\label{intro:app}
\Phi^{\rm app}({\boldsymbol x},z,t)
 = \sum_{i=0}^N(z+h-b({\boldsymbol x}))^{p_i}\phi_i({\boldsymbol x},t),
\end{equation}
where $p_0,p_1,\ldots,p_N$ are nonnegative integers satisfying $0=p_0<p_1<\cdots<p_N$. 
Plugging~\eqref{intro:app} into the action function~\eqref{intro:IK-action}, 
the corresponding Euler--Lagrange equation yields the Isobe--Kakinuma model of the form 
\begin{equation}\label{intro:IK model}
\left\{
 \begin{array}{l}
  \displaystyle
  H^{p_i} \partial_t \eta + \sum_{j=0}^N \left\{ \nabla \cdot \left(
   \frac{1}{p_i+p_j+1} H^{p_i+p_j+1} \nabla\phi_j
   - \frac{p_j}{p_i+p_j} H^{p_i+p_j} \phi_j \nabla b \right) \right.\\
  \displaystyle\phantom{ H^{p_i} \partial_t \eta + \sum_{j=0}^N \biggl\{ }
   + \left. \frac{p_i}{p_i+p_j} H^{p_i+p_j} \nabla b \cdot \nabla\phi_j
   - \frac{p_ip_j}{p_i+p_j-1} H^{p_i+p_j-1} (1 + |\nabla b|^2) \phi_j \right\} = 0 \\
  \makebox[28em]{}\mbox{for}\quad i=0,1,\ldots,N, \\[1ex]
  \displaystyle
  \sum_{j=0}^N H^{p_j} \partial_t \phi_j + g\eta 
   + \frac12 \left\{ \left| \sum_{j=0}^N ( H^{p_j}\nabla\phi_j - p_j H^{p_j-1}\phi_j\nabla b ) \right|^2 
   + \left( \sum_{j=0}^N p_j H^{p_j-1} \phi_j \right)^2 \right\} = 0,
 \end{array}
\right.
\end{equation}
where $H({\boldsymbol x},t) = h + \eta({\boldsymbol x},t) - b({\boldsymbol x})$ is the depth of the water. 
Here and in what follows we use the notational convention $0/0=0$. 
This system consists of $(N+1)$ evolution equations for $\eta$ and only one evolution equation 
for $(N+1)$ unknowns $(\phi_0,\phi_1,\ldots,\phi_N)$, so that this is an overdetermined and 
underdetermined composite system. 
However, the total number of the unknowns is equal to the total number of the equations.

The main purpose of this paper is to show that the Isobe--Kakinuma model~\eqref{intro:IK model} also 
enjoys a canonical Hamiltonian structure which is analogous to the one of the water waves problem. 
In particular, the Hamiltonian is a higher order shallow water approximation of the original Hamiltonian 
of the water waves problem.

\medskip
\noindent
{\bf Acknowledgement} \\
T. I. was partially supported by JSPS KAKENHI Grant Number JP17K18742 and JP17H02856.

\section{Preliminaries}
\label{section:IK}
Since the hypersurface $t=0$ in the space-time $\mathbf{R}^n\times\mathbf{R}$ 
is characteristic for the Isobe--Kakinuma model~\eqref{intro:IK model}, 
the initial value problem to the model is not solvable in general. 
In fact, if the problem has a solution $(\eta,\phi_0,\ldots,\phi_N)$, then by eliminating the time derivative 
$\partial_t\eta$ from the equations we see that the solution has to satisfy the relations 
\begin{align}\label{IK:comp}
& H^{p_i}\sum_{j=0}^N\nabla\cdot\left(
   \frac{1}{p_j+1}H^{p_j+1}\nabla\phi_j
   -H^{p_j}\phi_j\nabla b\right) \nonumber \\
&= \sum_{j=0}^N\left\{\nabla\cdot\left(
   \frac{1}{p_i+p_j+1}H^{p_i+p_j+1}\nabla\phi_j
   -\frac{p_j}{p_i+p_j}H^{p_i+p_j}\phi_j\nabla b\right)\right.  \\
&\phantom{ =\sum_{j=0}^N\biggl\{ }
 \displaystyle\left.
   +\frac{p_i}{p_i+p_j}H^{p_i+p_j}\nabla b\cdot\nabla\phi_j
   -\frac{p_ip_j}{p_i+p_j-1}H^{p_i+p_j-1}(1+|\nabla b|^2)\phi_j\right\} \nonumber 
\end{align}
for $i=1,\ldots,N$. 
Therefore, the initial data have to satisfy these relations in order to allow the existence of a solution. 
Y. Murakami and T. Iguchi~\cite{MurakamiIguchi2015} and R. Nemoto and T. Iguchi~\cite{NemotoIguchi2018} 
showed that the initial value problem to the Isobe--Kakinuma model~\eqref{intro:IK model} is well-posed 
locally in time in a class of initial data for which the relations~\eqref{IK:comp} and a generalized 
Rayleigh--Taylor sign condition are satisfied. 
Moreover, T. Iguchi~\cite{Iguchi2018-1, Iguchi2018-2} showed that the Isobe--Kakinuma model~\eqref{intro:IK model}
is a higher order shallow water approximation for the water wave problem 
in the strongly nonlinear regime. 
The Isobe--Kakinuma model~\eqref{intro:IK model} has also a conserved energy, which is the total 
energy given by 
\begin{align}
E^{\mbox{\rm\scriptsize IK}}(\eta,\boldsymbol{\phi})
&= \frac12\rho\iint_{\Omega(t)}\bigl( |\nabla\Phi^{\rm app}({\boldsymbol x},z,t)|^2
 +  (\partial_z\Phi^{\rm app}({\boldsymbol x},z,t))^2 \bigr)\,
 {\rm d}{\boldsymbol x}{\rm d}z
 + \frac12\rho g\int_{\mathbf{R}^n}\eta({\boldsymbol x},t)^2\,{\rm d}{\boldsymbol x} \nonumber \\
&= \frac{\rho}{2}\int_{\mathbf{R}^n}\left\{
 \sum_{i,j=0}^N\left(
  \frac{1}{p_i+p_j+1}H^{p_i+p_j+1}\nabla\phi_i\cdot\nabla\phi_j
  -\frac{2p_i}{p_i+p_j}H^{p_i+p_j}\phi_i\nabla b\cdot\nabla\phi_j \right. \right. \nonumber\\
& \left. \phantom{\sum_{i,j=0}^N}\makebox[6em]{}
 + \frac{p_ip_j}{p_i+p_j-1}H^{p_i+p_j-1}(1+|\nabla b|^2)\phi_i\phi_j\biggr) + g\eta^2 \right\}
 {\rm d}{\boldsymbol x},
 \label{IK:energy}
\end{align}
where $\boldsymbol{\phi}=(\phi_0,\phi_1,\ldots,\phi_N)^{\rm T}$.

We introduce second order differential operators $L_{ij}=L_{ij}(H,b)$ $(i,j=0,1,\ldots,N)$ depending on 
the water depth $H$ and the bottom topography $b$ by 
\begin{align}
L_{ij}\psi_j
&= -\nabla\cdot\biggl(
   \frac{1}{p_i+p_j+1}H^{p_i+p_j+1}\nabla\psi_j
   -\frac{p_j}{p_i+p_j}H^{p_i+p_j}\psi_j\nabla b\biggr) \nonumber \\[0.5ex]
&\quad\,
  -\frac{p_i}{p_i+p_j}H^{p_i+p_j}\nabla b\cdot\nabla\psi_j
   +\frac{p_ip_j}{p_i+p_j-1}H^{p_i+p_j-1}(1+|\nabla b|^2)\psi_j.
  \label{IK:L}
\end{align}
Then, we have $L_{ij}^*=L_{ji}$, where $L_{ij}^*$ is the adjoint operator of $L_{ij}$ in $L^2(\mathbf{R}^n)$. 
Moreover, the Isobe--Kakinuma model~\eqref{intro:IK model} and the relations~\eqref{IK:comp} can be written 
simply as 
\begin{equation}\label{IK:IK model}
\left\{
 \begin{array}{l}
  \displaystyle
  H^{p_i} \partial_t \eta - \sum_{j=0}^N L_{ij}(H,b) \phi_j = 0 \qquad\mbox{for}\quad i=0,1,\ldots,N, \\
  \displaystyle
  \sum_{j=0}^N H^{p_j} \partial_t \phi_j + g\eta
   + \frac12\bigl( |(\nabla\Phi^{\rm app})|_{z=\eta}|^2
    +  ((\partial_z\Phi^{\rm app})|_{z=\eta})^2  \bigr)  = 0
 \end{array}
\right.
\end{equation}
and 
\begin{equation}\label{IK:comp2}
\sum_{j=0}^N ( L_{ij}(H,b) - H^{p_i} L_{0j}(H,b) ) \phi_j = 0 \qquad\mbox{for}\quad i=1,\ldots,N,
\end{equation}
respectively. 
It is easy to calculate the variational derivative of the energy function 
$E^{\mbox{\rm\scriptsize IK}}(\eta,\boldsymbol{\phi})$ and to obtain 
\begin{equation}\label{IK:deltaE}
\begin{cases}
\displaystyle
\frac{1}{\rho} \delta_{\phi_i} E^{\mbox{\rm\scriptsize IK}}
 = \sum_{j=0}^N L_{ij}(H,b)\phi_j \quad j=0,1,\ldots,N, \\[3ex]
\displaystyle
\frac{1}{\rho} \delta_{\eta} E^{\mbox{\rm\scriptsize IK}}
 = \frac12\bigl(|(\nabla\Phi^{\rm app})|_{z=\eta}|^2
  + ((\partial_z\Phi^{\rm app})|_{z=\eta})^2  \bigr)  + g\eta.
\end{cases}
\end{equation}
Therefore, introducing $\boldsymbol{l}(H) = (H^{p_0},H^{p_1},\ldots,H^{p_N})^{\rm T}$, 
the Isobe--Kakinuma model~\eqref{intro:IK model} can also be written simply as 
\begin{equation}\label{IK:IK model2}
\begin{pmatrix}
 0 & -\boldsymbol{l}(H)^{\rm T} \\
 \boldsymbol{l}(H) & O
\end{pmatrix}
\partial_t
\begin{pmatrix}
 \eta \\
 \boldsymbol{\phi}
\end{pmatrix}
= \frac{1}{\rho}
\begin{pmatrix}
 \delta_{\eta} E^{\mbox{\rm\scriptsize IK}} \\
 \delta_{\boldsymbol{\phi}} E^{\mbox{\rm\scriptsize IK}}
\end{pmatrix}.
\end{equation}

In view of~\eqref{IK:comp2} we introduce also linear operators $\mathcal{L}_i = \mathcal{L}_i(H,b)$ 
$(i = 1,\ldots,N)$ depending on the water depth $H$ and the bottom topography $b$, 
and acting on $\mbox{\boldmath$\varphi$} = (\varphi_0,\varphi_1,\ldots,\varphi_N)^{\rm T}$ by 
\begin{equation}\label{IK:scrLi}
\mathcal{L}_i \mbox{\boldmath$\varphi$} = \sum_{j=0}^N \bigl( L_{ij}(H,b) - H^{p_i} L_{0j}(H,b) \bigr) \varphi_j
 \quad\mbox{for}\quad i=1,\ldots,N, 
\end{equation}
and put $\mathcal{L} \mbox{\boldmath$\varphi$}
 = (\mathcal{L}_1 \mbox{\boldmath$\varphi$},\ldots,\mathcal{L}_N \mbox{\boldmath$\varphi$})^{\rm T}$. 
Then, the conditions~\eqref{IK:comp} can be written simply as 
\begin{equation}\label{IK:comp3}
 \mathcal{L}(H,b)\boldsymbol{\phi} = {\bf 0} .
\end{equation}
For later use, we also put $L=L(H,b)=(L_{ij}(H,b))_{0\leq i,j\leq N}$ and define $L_0=L_0(H,b)$ by 
\begin{equation}\label{IK:L0}
L_0(H,b)\boldsymbol{\varphi} = \sum_{j=0}^NL_{0j}(H,b)\varphi_j.
\end{equation}
Then, the conditions~\eqref{IK:comp} are also equivalent to 
\begin{equation}\label{IK:comp4}
L(H,b)\boldsymbol{\phi} = \bigl(L_0(H,b)\boldsymbol{\phi}\bigr)\boldsymbol{l}(H).
\end{equation}
Now, for given functions $F_0$ and $\boldsymbol{F}=(F_1,\ldots,F_N)^{\rm T}$ we consider the equations 
\begin{equation}\label{IK:elliptic}
\begin{cases}
 \boldsymbol{l}(H)\cdot \boldsymbol{\varphi}  =F_0,\\
  \mathcal{L}(H,b)\boldsymbol{\varphi} = \boldsymbol{F}.
  \end{cases}
\end{equation}
Let $W^{m,p}=W^{m,p}(\mathbf{R}^n)$ be the $L^p$-based Sobolev space of order $m$ on $\mathbf{R}^n$ 
and $H^m=W^{m,2}$. 
The norms of the Sobolev space $H^m$ and of a Banach space $\mathscr{X}$ 
are denoted by $\|\cdot\|_m$ and by $\|\cdot\|_\mathscr{X}$, respectively. 
Set $\mathring{H}^m=\{\phi \,;\, \nabla\phi\in H^{m-1} \}$. 
The following lemma was proved in~\cite{NemotoIguchi2018}.

\begin{lemma}\label{IK:lem1}
Let $h,c_0,M$ be positive constants and $m$ an integer such that $m>\frac{n}{2}+1$. 
There exists a positive constant $C$ such that if $\eta$ and $b$ satisfy 
\begin{equation}\label{IK:cond}
\left\{
 \begin{array}{l}
  \|\eta\|_m+\|b\|_{W^{m,\infty}} \leq M, \\[0.5ex]
  c_0\leq H({\boldsymbol x})=h+\eta({\boldsymbol x})-b({\boldsymbol x}) \quad\mbox{for}\quad {\boldsymbol x}\in\mathbf{R}^n,
 \end{array}
\right.
\end{equation}
then for any $F_0\in \mathring{H}^k$ and $\mbox{\boldmath$F$}=(F_1,\ldots,F_N)^{\rm T}\in (H^{k-2})^N$ 
with $1\leq k\leq m$ there exists a unique solution 
$\mbox{\boldmath$\varphi$}=(\varphi_0,\varphi_1,\ldots,\varphi_N)^{\rm T}\in \mathring{H}^k\times  (H^{k})^N$ 
to~\eqref{IK:elliptic}. 
Moreover, the solution satisfies
\[
\|\nabla\varphi_0\|_{k-1}+\|(\varphi_1,\ldots,\varphi_N)\|_k
\leq C\;(\|\nabla F_0\|_{k-1}+\|(F_1,\ldots,F_N)\|_{k-2}).
\]
\end{lemma}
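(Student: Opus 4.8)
The plan is to solve the coupled system \eqref{IK:elliptic} by reducing it to a coercive variational problem. First I would observe that the operator $\mathcal{L}_i$ defined in \eqref{IK:scrLi} can be rewritten using the self-adjointness relation $L_{ij}^*=L_{ji}$ together with \eqref{IK:L}. The key algebraic step is to recognize that the quadratic form associated with the full matrix operator $L=L(H,b)$ is, up to the factor $\rho$, precisely the integrand of the kinetic energy: for $\boldsymbol{\varphi}=(\varphi_0,\ldots,\varphi_N)^{\mathrm T}$,
\[
\langle L(H,b)\boldsymbol{\varphi},\boldsymbol{\varphi}\rangle_{L^2}
 = \iint_{\Omega}\bigl(|\nabla\Phi^{\mathrm{app}}_{\boldsymbol{\varphi}}|^2+(\partial_z\Phi^{\mathrm{app}}_{\boldsymbol{\varphi}})^2\bigr)\,{\rm d}{\boldsymbol x}\,{\rm d}z \geq 0,
\]
where $\Phi^{\mathrm{app}}_{\boldsymbol{\varphi}}=\sum_i(z+h-b)^{p_i}\varphi_i$. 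This follows by integrating by parts, exactly as in the derivation of \eqref{IK:energy}. Thus $L$ is symmetric and nonnegative, and it degenerates only on the kernel consisting of those $\boldsymbol{\varphi}$ for which $\Phi^{\mathrm{app}}_{\boldsymbol{\varphi}}$ is constant; generically that kernel is spanned by $\boldsymbol{e}_0=(1,0,\ldots,0)^{\mathrm T}$.

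Next I would set up the right function space. Following the structure of \eqref{IK:comp4}, the operator $\mathcal{L}$ acts on the quotient modulo the direction $\boldsymbol{l}(H)$, since $\mathcal{L}_i$ annihilates $\boldsymbol{l}(H)$-multiples in a suitable sense; in fact the first equation $\boldsymbol{l}(H)\cdot\boldsymbol{\varphi}=F_0$ fixes precisely that one scalar degree of freedom that $L$ cannot see. So I would parametrize solutions by writing $\varphi_0$ in terms of the remaining components via the first equation and the constraint structure, or more cleanly: seek $\boldsymbol{\varphi}$ minimizing the functional
\[
\mathcal{E}(\boldsymbol{\varphi}) = \tfrac12\langle L(H,b)\boldsymbol{\varphi},\boldsymbol{\varphi}\rangle
 - \langle F_0, L_0(H,b)\boldsymbol{\varphi}\rangle_{L^2} - \sum_{i=1}^N\langle F_i,\varphi_i\rangle_{L^2}
\]
over the affine subspace $\{\boldsymbol{\varphi}: \boldsymbol{l}(H)\cdot\boldsymbol{\varphi}=F_0\}$ of $\mathring{H}^1\times(H^1)^N$, and then check that the Euler--Lagrange equations of this constrained minimization are exactly \eqref{IK:elliptic} after eliminating the Lagrange multiplier. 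Coercivity on this affine space is the crux: one must show $\langle L\boldsymbol{\varphi},\boldsymbol{\varphi}\rangle \gtrsim \|\nabla\varphi_0\|_0^2+\|(\varphi_1,\ldots,\varphi_N)\|_1^2$ whenever $\boldsymbol{l}(H)\cdot\boldsymbol{\varphi}=0$, using $c_0\le H$ and the Gårding-type structure of $L$ (the top-order part is a positive-definite $(N{+}1)\times(N{+}1)$ matrix of Laplacians weighted by powers of $H$, and the zeroth-order part controls the $L^2$ norms of $\varphi_1,\ldots,\varphi_N$ through the $\frac{p_ip_j}{p_i+p_j-1}H^{p_i+p_j-1}$ terms). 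Lax--Milgram then yields existence and uniqueness of a weak solution in $\mathring{H}^1\times(H^1)^N$, with the stated estimate for $k=1$.

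For the regularity statement with $2\le k\le m$, I would proceed by a standard bootstrap / difference-quotient argument: commute tangential derivatives $\partial^\alpha$ with the system, treat the commutator terms as source terms in the same elliptic system (their $H^{k-2}$, resp. $\mathring{H}^{k}$, norms being controlled by $\|\eta\|_m+\|b\|_{W^{m,\infty}}\le M$ and lower-order norms of $\boldsymbol{\varphi}$ already obtained by induction), and apply the $k=1$ estimate. Since $m>\frac n2+1$, the Sobolev algebra property ensures all products of coefficients (powers of $H$, $\nabla b$) with derivatives of $\boldsymbol{\varphi}$ land in the right spaces. The main obstacle I anticipate is the coercivity estimate on the constrained space: one has to exploit the precise cancellation that makes $L$ degenerate \emph{only} in the $\boldsymbol{l}(H)$ direction, and handle the variable-bottom cross terms (those involving $\nabla b$) without losing the sign — most likely by completing the square inside the integral $\iint_\Omega(|\nabla\Phi^{\mathrm{app}}|^2+(\partial_z\Phi^{\mathrm{app}})^2)$ and then bounding below the resulting expression using $\mathring{H}^1$ control on $\Phi^{\mathrm{app}}$ restricted to the slab of thickness $\ge c_0$. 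Everything else is routine elliptic theory. Since this lemma is quoted from \cite{NemotoIguchi2018}, I would in practice simply cite it, but the above is the self-contained argument.
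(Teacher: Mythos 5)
The paper itself offers no proof of this lemma: it is quoted from \cite{NemotoIguchi2018}, so there is no in-paper argument to compare against, and your closing remark that one would in practice simply cite that reference is exactly what the authors do. That said, your reconstruction is essentially the right (and standard) route: the identity $\langle L(H,b)\boldsymbol{\varphi},\boldsymbol{\varphi}\rangle_{L^2}=\iint_\Omega\bigl(|\nabla\Phi^{\rm app}_{\boldsymbol{\varphi}}|^2+(\partial_z\Phi^{\rm app}_{\boldsymbol{\varphi}})^2\bigr)\,{\rm d}\boldsymbol{x}\,{\rm d}z$ is correct, and the "completing the square" you anticipate is precisely $\nabla\Phi^{\rm app}_{\boldsymbol{\varphi}}+(\partial_z\Phi^{\rm app}_{\boldsymbol{\varphi}})\nabla b=\sum_i(z+h-b)^{p_i}\nabla\varphi_i$; positivity of the Gram matrices of $\{(z+h-b)^{p_i}\}_{i\ge0}$ and $\{p_i(z+h-b)^{p_i-1}\}_{i\ge1}$ on a slab of thickness $\ge c_0$ then gives $\langle L\boldsymbol{\varphi},\boldsymbol{\varphi}\rangle\ge c\bigl(\|\nabla\varphi_0\|_0^2+\|(\varphi_1,\ldots,\varphi_N)\|_1^2\bigr)$ on \emph{all} of $\mathring{H}^1\times(H^1)^N$. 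In particular, no constrained coercivity is needed: the only degeneracy of $L$ is along constants in $\varphi_0$, which is already quotiented out by working in $\mathring{H}^1$. Your heuristic that the first equation of \eqref{IK:elliptic} "fixes the degree of freedom $L$ cannot see" is therefore misleading; its actual role is to supply the $(N{+}1)$-st equation, since $\mathcal{L}$ has only $N$ components. Likewise the claim that $\mathcal{L}_i$ annihilates $\boldsymbol{l}(H)$-multiples is not correct as stated, though nothing in the argument rests on it.

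The one step that fails as written is your functional $\mathcal{E}$: the term $-\langle F_0,L_0(H,b)\boldsymbol{\varphi}\rangle_{L^2}$ must be deleted. Keeping it, the Euler--Lagrange system on the affine set $\{\boldsymbol{l}(H)\cdot\boldsymbol{\varphi}=F_0\}$ reads $\sum_jL_{ij}\varphi_j-L_{i0}F_0-F_i=\mu H^{p_i}$ (with $F_i$ absent for $i=0$), and eliminating the multiplier via the $0$-th row gives $\mathcal{L}_i\boldsymbol{\varphi}=F_i+(L_{i0}-H^{p_i}L_{00})F_0$ rather than $\mathcal{L}_i\boldsymbol{\varphi}=F_i$, i.e.\ you solve the wrong problem. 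Minimizing instead $\tfrac12\langle L(H,b)\boldsymbol{\varphi},\boldsymbol{\varphi}\rangle-\sum_{i=1}^N\langle F_i,\varphi_i\rangle$ over the same affine set yields $\mu=L_0(H,b)\boldsymbol{\varphi}$ from the $0$-th row and then exactly \eqref{IK:elliptic}, consistently with \eqref{IK:comp4}. With that correction, Lax--Milgram (after translating by the particular element $(F_0,0,\ldots,0)^{\rm T}$ of the constraint set) gives the $k=1$ case with the stated estimate, and your commutator bootstrap for $2\le k\le m$ is routine under $m>\frac n2+1$.
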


\section{Hamiltonian structure}
\label{section:HS}
In the following, we will fix $b\in W^{m,\infty}$ with $m>\frac{n}{2}+1$. 
Let $(\eta,\phi_0,\ldots,\phi_N)$ be a solution to the Isobe--Kakinuma model~\eqref{intro:IK model}. 
As we will see later, the canonical variables of the Isobe--Kakinuma model are the surface elevation 
$\eta$ and the trace of the approximated velocity potential on the water surface 
\begin{equation}\label{H:CV}
\phi = \Phi^{\rm app}|_{z=\eta} = \sum_{j=0}^NH^{p_j}\phi_j
 = \boldsymbol{l}(H)\cdot\boldsymbol{\phi}.
\end{equation}
Then, the relations~\eqref{IK:comp} and the above equation are written in the simple form 
\begin{equation}\label{H:eqCV}
\begin{cases}
\boldsymbol{l}(H)\cdot \boldsymbol{\phi}  =\phi,\\
\mathcal{L}(H,b)\boldsymbol{\phi} = {\bf 0}.
  \end{cases}
\end{equation}
Therefore, it follows from Lemma~\ref{IK:lem1} that once the canonical variables 
$(\eta,\phi)$ are given in an appropriate class of functions, 
$\boldsymbol{\phi}=(\phi_0,\phi_1,\ldots,\phi_N)^{\rm T}$ can be determine uniquely. 
In other words, these variables $(\phi_0,\phi_1,\ldots,\phi_N)$ depend on the canonical variables 
$(\eta,\phi)$ and furthermore they depend on $\phi$ linearly so that we can write 
\[
\boldsymbol{\phi} = {\bf S}(\eta,b)\phi
\]
with a linear operator ${\bf S}(\eta,b)$ depending on $\eta$ and $b$. 
Since we fixed $b$, we simply write ${\bf S}(\eta)$ in place of ${\bf S}(\eta,b)$ for simplicity.

We proceed to analyze this operator ${\bf S}(\eta)$ more precisely. 
We put 
\[
U^m_b = \{ \eta \in H^m \,;\, 
 \inf_{{\boldsymbol x}\in\mathbf{R}^n}(h+\eta({\boldsymbol x})-b({\boldsymbol x}))>0 \},
\]
which is an open set in $H^m$.
For Banach spaces $\mathscr{X}$ and $\mathscr{Y}$, we denote by $B(\mathscr{X};\mathscr{Y})$ 
the set of all linear and bounded operators from $\mathscr{X}$ into $\mathscr{Y}$. 
In view of \eqref{IK:comp4}, \eqref{H:CV}, and Lemma~\ref{IK:lem1}, we see easily the following lemma.

\begin{lemma}\label{H:lem1}
Let $m$ be an integer such that $m>\frac{n}{2}+1$ and $b\in W^{m,\infty}$. 
For each $\eta \in U^m_b$ and for $k=1,2,\ldots,m$, the linear operator 
\[
{\bf S}(\eta) : \mathring{H}^k \ni\phi \mapsto \boldsymbol{\phi} \in\mathring{H}^k\times(H^k)^N
\]
is defined,
where $\boldsymbol{\phi}=(\phi_0,\phi_1,\ldots,\phi_N)^{\rm T}$ is the unique solution to \eqref{H:eqCV}. 
Moreover, we have ${\bf S}(\eta) \in B(\mathring{H}^k;\mathring{H}^k\times(H^k)^N)$ and 
\[
 L(H,b)\boldsymbol{\phi} = \bigl(L_0(H,b)\boldsymbol{\phi}\bigr)\boldsymbol{l}(H).
\]
\end{lemma}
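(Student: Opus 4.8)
The plan is to obtain the whole statement as an immediate consequence of Lemma~\ref{IK:lem1}. First I would fix $\eta\in U^m_b$ and note that, by the very definition of $U^m_b$, we have $\eta\in H^m$ and $c_0:=\inf_{\boldsymbol{x}\in\mathbf{R}^n}\bigl(h+\eta(\boldsymbol{x})-b(\boldsymbol{x})\bigr)>0$; putting $M:=\|\eta\|_m+\|b\|_{W^{m,\infty}}$, the pair $(\eta,b)$ then satisfies the hypothesis~\eqref{IK:cond} of Lemma~\ref{IK:lem1} with these constants $c_0$ and $M$. Next, for $1\leq k\leq m$ and $\phi\in\mathring{H}^k$, I would apply Lemma~\ref{IK:lem1} to~\eqref{IK:elliptic} with the data $F_0=\phi\in\mathring{H}^k$ and $\boldsymbol{F}={\bf 0}\in(H^{k-2})^N$; since~\eqref{IK:elliptic} with this data is exactly the system~\eqref{H:eqCV}, the lemma produces a unique $\boldsymbol{\phi}=(\phi_0,\ldots,\phi_N)^{\rm T}\in\mathring{H}^k\times(H^k)^N$ solving~\eqref{H:eqCV}, together with the bound $\|\nabla\phi_0\|_{k-1}+\|(\phi_1,\ldots,\phi_N)\|_k\leq C\|\nabla\phi\|_{k-1}$, with $C$ depending only on $h$, $c_0$, $M$, and $m$. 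This defines the map ${\bf S}(\eta):\phi\mapsto\boldsymbol{\phi}$.

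The second step is to record linearity and boundedness of ${\bf S}(\eta)$. Boundedness in $B(\mathring{H}^k;\mathring{H}^k\times(H^k)^N)$ is precisely the displayed a priori estimate, upon recalling that the norm of $\mathring{H}^k$ is $\|\nabla\cdot\|_{k-1}$. Linearity follows from the uniqueness part of Lemma~\ref{IK:lem1}: since~\eqref{H:eqCV} is linear in the pair $(\phi,\boldsymbol{\phi})$, if $\boldsymbol{\phi}^{(1)}={\bf S}(\eta)\phi^{(1)}$ and $\boldsymbol{\phi}^{(2)}={\bf S}(\eta)\phi^{(2)}$, then $\alpha\boldsymbol{\phi}^{(1)}+\beta\boldsymbol{\phi}^{(2)}$ solves~\eqref{H:eqCV} with $\phi$ replaced by $\alpha\phi^{(1)}+\beta\phi^{(2)}$, hence by uniqueness coincides with ${\bf S}(\eta)(\alpha\phi^{(1)}+\beta\phi^{(2)})$, for all scalars $\alpha,\beta$.

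The third step is the identity $L(H,b)\boldsymbol{\phi}=\bigl(L_0(H,b)\boldsymbol{\phi}\bigr)\boldsymbol{l}(H)$, which is nothing but the equivalence, already recorded around~\eqref{IK:comp4}, between the second equation $\mathcal{L}(H,b)\boldsymbol{\phi}={\bf 0}$ of~\eqref{H:eqCV} and~\eqref{IK:comp4}. Concretely, from the definitions~\eqref{IK:scrLi} and~\eqref{IK:L0} one has $\mathcal{L}_i\boldsymbol{\phi}=\bigl(L(H,b)\boldsymbol{\phi}\bigr)_i-H^{p_i}\bigl(L_0(H,b)\boldsymbol{\phi}\bigr)$ for $i=1,\ldots,N$, so $\mathcal{L}(H,b)\boldsymbol{\phi}={\bf 0}$ gives the components $i=1,\ldots,N$ of the asserted identity, while its $i=0$ component is the tautology $\bigl(L(H,b)\boldsymbol{\phi}\bigr)_0=L_0(H,b)\boldsymbol{\phi}=H^{p_0}\bigl(L_0(H,b)\boldsymbol{\phi}\bigr)$, since $p_0=0$.

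Regarding the main difficulty: there is essentially none, because all the analytic work — the solvability of the degenerate elliptic system and the associated estimate — is carried out in Lemma~\ref{IK:lem1}. The only points requiring a little attention are bookkeeping ones: that the norm on $\mathring{H}^k$ is the homogeneous one $\|\nabla\cdot\|_{k-1}$, so that the estimate of Lemma~\ref{IK:lem1} is literally the operator bound for ${\bf S}(\eta)$; that the vanishing right-hand side ${\bf 0}$ still lies in $(H^{k-2})^N$ in the borderline case $k=1$; and that the operator norm of ${\bf S}(\eta)$ may depend on $\eta$ through $c_0$ and $M$, which is harmless since the claim is made for each fixed $\eta\in U^m_b$.
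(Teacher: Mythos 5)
Your proposal is correct and follows exactly the route the paper intends: the authors give no detailed proof, stating only that the lemma follows ``in view of \eqref{IK:comp4}, \eqref{H:CV}, and Lemma~\ref{IK:lem1},'' and your argument fills in precisely those steps (applying Lemma~\ref{IK:lem1} with $F_0=\phi$, $\boldsymbol{F}=\boldsymbol{0}$, reading off linearity from uniqueness and boundedness from the a priori estimate, and unwinding the definitions of $\mathcal{L}_i$, $L$, $L_0$ for the final identity). The bookkeeping remarks at the end, in particular on the $\mathring{H}^k$ norm and the $i=0$ component being a tautology since $p_0=0$, are exactly the right points to check.
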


Formally, $D_{\eta}{\bf S}(\eta)[\dot{\eta}]$ the Fr\'echet derivative of ${\bf S}(\eta)$ with respect to 
$\eta$ is given by 
\begin{equation}\label{H:Frechet}
\begin{cases}
 \boldsymbol{l}(H)\cdot\dot{\boldsymbol{\psi}}
  = -\bigl( \boldsymbol{l}'(H)\cdot\boldsymbol{\phi} \bigr)\dot{\eta}, \\
 \mathcal{L}(H,b)\dot{\boldsymbol{\psi}} = -D_H\mathcal{L}(H,b)[\dot{\eta}]\boldsymbol{\phi},
\end{cases}
\end{equation}
with $\boldsymbol{\phi}={\bf S}(\eta)\phi$ and $\dot{\boldsymbol{\psi}}=D_{\eta}{\bf S}(\eta)[\dot{\eta}]\phi$, 
where $\boldsymbol{l}'(H)\cdot\boldsymbol{\phi} =\sum_{j=1}^Np_jH^{p_j-1}\phi_j$, 
\[
D_H\mathcal{L}_i(H)[\dot{\eta}]\boldsymbol{\phi}
= \sum_{j=0}^N\bigl( D_HL_{ij}(H,b)[\dot{\eta}] - H^{p_i}D_HL_{0j}(H,b)[\dot{\eta}]
 - p_iH^{p_i-1}\dot{\eta}L_{0j}(H,b) \bigr)\phi_j,
\]
and 
\begin{align*}
D_H L_{ij}(H,b)[\dot{\eta}]\phi_j
&= -\nabla\cdot\bigl\{ \dot{\eta}( 
   H^{p_i+p_j}\nabla\phi_j
   -p_j H^{p_i+p_j-1}\phi_j\nabla b ) \bigr\} \nonumber \\[0.5ex]
&\quad\,
 + \dot{\eta}\bigl\{ -p_i H^{p_i+p_j-1}\nabla b\cdot\nabla\phi_j
   + p_ip_j H^{p_i+p_j-2}(1+|\nabla b|^2)\phi_j \bigr\}.
\end{align*}
By using these equations together with Lemma~\ref{IK:lem1} and standard arguments, 
we can justify the Fr\'echet differentiability of ${\bf S}(\eta)$ with respect to $\eta$. 
More precisely, we have the following lemma.

\begin{lemma}\label{H:lem2}
Let $m$ be an integer such that $m>\frac{n}{2}+1$ and $b\in W^{m,\infty}$. 
Then, the map $U^m_b \ni \eta \mapsto {\bf S}(\eta) \in B(\mathring{H}^k;\mathring{H}^k\times(H^k)^N)$ 
is Fr\'echet differentiable for $k=1,2,\ldots,m$, and~\eqref{H:Frechet} holds. 
\end{lemma}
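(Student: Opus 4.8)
The plan is to regard ${\bf S}(\eta)$ as the solution operator of the elliptic system~\eqref{H:eqCV} and to differentiate under it, with Lemma~\ref{IK:lem1} supplying all the analysis --- both in defining the candidate derivative and in bounding the Taylor remainder. First I would fix $\eta\in U^m_b$ and, using that $U^m_b$ is open in $H^m$, choose $\delta>0$ so that every $\eta+\dot{\eta}$ with $\|\dot{\eta}\|_m\le\delta$ still satisfies~\eqref{IK:cond} with one fixed triple $(h,c_0,M)$; the constant $C$ of Lemma~\ref{IK:lem1} then serves simultaneously for all such perturbed depths. I write $H=h+\eta-b$, $H^{\dot{\eta}}=H+\dot{\eta}$, $\boldsymbol{\phi}={\bf S}(\eta)\phi$, $\boldsymbol{\phi}^{\dot{\eta}}={\bf S}(\eta+\dot{\eta})\phi$, and I keep $b\in W^{m,\infty}$ and $m>\frac{n}{2}+1$ fixed, so that for $1\le k\le m$ tame product estimates in $H^k$ are available for factors controlled in $H^m$ or $W^{m,\infty}$.

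Next I would construct the candidate derivative. The system~\eqref{H:Frechet} has precisely the form~\eqref{IK:elliptic} with $F_0=-(\boldsymbol{l}'(H)\cdot\boldsymbol{\phi})\dot{\eta}$ and $\boldsymbol{F}=-D_H\mathcal{L}(H,b)[\dot{\eta}]\boldsymbol{\phi}$. Using the explicit expressions for $\boldsymbol{l}'(H)\cdot\boldsymbol{\phi}$, $D_H\mathcal{L}_i(H)[\dot{\eta}]\boldsymbol{\phi}$ and $D_HL_{ij}(H,b)[\dot{\eta}]\phi_j$ displayed just before the statement, I would verify that whenever $\boldsymbol{\phi}\in\mathring{H}^k\times(H^k)^N$ one has $F_0\in\mathring{H}^k$ and $\boldsymbol{F}\in(H^{k-2})^N$; the only terms costing two derivatives on $\phi_j$ are $\nabla\cdot(\dot{\eta}\,H^{p_i+p_j}\nabla\phi_j)$ and $\dot{\eta}\,L_{0j}(H,b)\phi_j$, which land exactly in $H^{k-2}$, while the factor $H^{p_i+p_j-2}$ always carries the prefactor $p_ip_j$, which vanishes unless $p_i,p_j\ge1$, so no negative power of $H$ ever occurs. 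Lemma~\ref{IK:lem1} then yields a unique solution $\dot{\boldsymbol{\psi}}=:\dot{{\bf S}}(\eta)[\dot{\eta}]\phi\in\mathring{H}^k\times(H^k)^N$ of~\eqref{H:Frechet} with $\|\dot{\boldsymbol{\psi}}\|\le C\|\dot{\eta}\|_m\|\phi\|$, and by uniqueness $\dot{\boldsymbol{\psi}}$ is linear in $\dot{\eta}$ (and in $\phi$); hence $\dot{{\bf S}}(\eta)\in B(H^m;B(\mathring{H}^k;\mathring{H}^k\times(H^k)^N))$ is a legitimate candidate for $D_\eta{\bf S}(\eta)$.

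It then remains to control the remainder. I would first record a Lipschitz bound: subtracting the two instances of~\eqref{H:eqCV} at $\eta+\dot{\eta}$ and at $\eta$ shows that $\boldsymbol{\phi}^{\dot{\eta}}-\boldsymbol{\phi}$ solves a system of the form~\eqref{IK:elliptic} with right-hand sides $-(\boldsymbol{l}(H^{\dot{\eta}})-\boldsymbol{l}(H))\cdot\boldsymbol{\phi}^{\dot{\eta}}$ and $-(\mathcal{L}(H^{\dot{\eta}},b)-\mathcal{L}(H,b))\boldsymbol{\phi}^{\dot{\eta}}$; since all coefficients are polynomials in $H$ and $H^{\dot{\eta}}-H=\dot{\eta}$, these increments are $O(\|\dot{\eta}\|_m)$ in the relevant norms, so Lemma~\ref{IK:lem1} together with $\|\boldsymbol{\phi}^{\dot{\eta}}\|\le C\|\phi\|$ from Lemma~\ref{H:lem1} gives $\|\boldsymbol{\phi}^{\dot{\eta}}-\boldsymbol{\phi}\|\le C\|\dot{\eta}\|_m\|\phi\|$. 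Then, with $\boldsymbol{r}=\boldsymbol{\phi}^{\dot{\eta}}-\boldsymbol{\phi}-\dot{\boldsymbol{\psi}}$, subtracting~\eqref{H:eqCV} at $\eta$ and the linearized system~\eqref{H:Frechet} from~\eqref{H:eqCV} at $\eta+\dot{\eta}$ and regrouping, I would arrange that $\boldsymbol{r}$ solves a system of the form~\eqref{IK:elliptic} whose right-hand side splits into (i) the second-order Taylor remainders of $H\mapsto\boldsymbol{l}(H)$ and $H\mapsto\mathcal{L}(H,b)$ about $H$, evaluated at $H^{\dot{\eta}}$ and applied to $\boldsymbol{\phi}^{\dot{\eta}}$, which are $O(\|\dot{\eta}\|_m^2)\|\phi\|$ since the coefficients are polynomial in $H$, and (ii) the first-order increments of those coefficients, each $O(\|\dot{\eta}\|_m)$, applied to $\boldsymbol{\phi}^{\dot{\eta}}-\boldsymbol{\phi}$, which are $O(\|\dot{\eta}\|_m^2)\|\phi\|$ by the Lipschitz bound. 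As in the previous step this right-hand side lies in $\mathring{H}^k\times(H^{k-2})^N$, so Lemma~\ref{IK:lem1} gives $\|\boldsymbol{r}\|\le C\|\dot{\eta}\|_m^2\|\phi\|$, that is, $\|{\bf S}(\eta+\dot{\eta})-{\bf S}(\eta)-\dot{{\bf S}}(\eta)[\dot{\eta}]\|_{B(\mathring{H}^k;\mathring{H}^k\times(H^k)^N)}\le C\|\dot{\eta}\|_m^2=o(\|\dot{\eta}\|_m)$, which is exactly the asserted Fr\'echet differentiability, with $D_\eta{\bf S}(\eta)[\dot{\eta}]$ characterized by~\eqref{H:Frechet}.

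I expect the main obstacle to be the bookkeeping in the two steps above needed to ensure that every remainder term really lands in the space demanded by Lemma~\ref{IK:lem1} --- $\mathring{H}^k$ for the first equation, $(H^{k-2})^N$ for the second --- in the presence of the second-order operators $D_HL_{ij}$ in the linearization. This rests on tame Sobolev product estimates, which is where the condition $m>\frac{n}{2}+1$ enters, and on the observation that no negative power of $H$ actually appears. The remaining points --- linearity of $\dot{{\bf S}}(\eta)$ in $\dot{\eta}$ and uniformity of all constants over $\|\dot{\eta}\|_m\le\delta$ and over $\|\phi\|_{\mathring{H}^k}\le1$ --- should be routine.
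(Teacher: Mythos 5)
Your proposal is correct and follows exactly the route the paper intends: the paper itself only remarks that the Fr\'echet differentiability follows from the formal linearized system~\eqref{H:Frechet} ``together with Lemma~\ref{IK:lem1} and standard arguments,'' and your argument --- solving~\eqref{H:Frechet} via Lemma~\ref{IK:lem1} to define the candidate derivative, deriving the Lipschitz bound, and showing the Taylor remainder solves an elliptic system of type~\eqref{IK:elliptic} with data of size $O(\|\dot{\eta}\|_m^2)\|\phi\|$ --- is precisely the standard elaboration of that sketch. The bookkeeping you flag (that the right-hand sides land in $\mathring{H}^k\times(H^{k-2})^N$, that $\phi_0$ only ever appears through $\nabla\phi_0$ or with a vanishing factor $p_0$, and that no negative powers of $H$ occur) is handled correctly.
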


As mentioned before, the Isobe--Kakinuma model~\eqref{intro:IK model} has a conserved quantity 
$E^{\mbox{\rm\scriptsize IK}}(\eta,\boldsymbol{\phi})$ given by~\eqref{IK:energy}, 
which is the total energy. 
Now, we define a Hamiltonian $\mathscr{H}^{\mbox{\rm\scriptsize IK}}(\eta,\phi)$ to the 
Isobe--Kakinuma model by 
\begin{equation}\label{H:H}
\mathscr{H}^{\mbox{\rm\scriptsize IK}}(\eta,\phi)
 = \frac{1}{\rho}E^{\mbox{\rm\scriptsize IK}}(\eta,{\bf S}(\eta)\phi),
\end{equation}
which is essentially the total energy in terms of the canonical variables $(\eta,\phi)$.

\begin{lemma}\label{H:lem3}
Let $m$ be an integer such that $m>\frac{n}{2}+1$ and $b\in W^{m,\infty}$. 
Then, the map 
$U^m_b \times\mathring{H}^1 \ni (\eta,\phi) \mapsto \mathscr{H}^{\mbox{\rm\scriptsize IK}}(\eta,\phi)\in\mathbf{R}$ 
is Fr\'echet differentiable and the variational derivatives of the Hamiltonian are 
\[
\begin{cases}
\delta_\phi\mathscr{H}^{\mbox{\rm\scriptsize IK}}(\eta,\phi) = L_0(H,b)\boldsymbol{\phi}, \\
\delta_\eta\mathscr{H}^{\mbox{\rm\scriptsize IK}}(\eta,\phi)
 = \frac{1}{\rho}(\delta_\eta E^{\mbox{\rm\scriptsize IK}})(\eta,\boldsymbol{\phi})
  -(\boldsymbol{l}'(H)\cdot\boldsymbol{\phi})L_0(H,b)\boldsymbol{\phi},
\end{cases}
\]
where $\boldsymbol{\phi}={\bf S}(\eta)\phi$. 
\end{lemma}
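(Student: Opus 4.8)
The plan is to deduce the formulas by a single application of the chain rule to $\mathscr{H}^{\mbox{\rm\scriptsize IK}}(\eta,\phi)=\frac1\rho E^{\mbox{\rm\scriptsize IK}}(\eta,{\bf S}(\eta)\phi)$, using the self-adjointness $L_{ij}^{*}=L_{ji}$ only implicitly through $\delta_{\boldsymbol\phi}E^{\mbox{\rm\scriptsize IK}}=\rho L(H,b)\boldsymbol\phi$, and the constraint \eqref{IK:comp4} to bypass any explicit use of $D_\eta{\bf S}(\eta)$. First I would record that $\mathscr{H}^{\mbox{\rm\scriptsize IK}}$ is a composition of Fr\'echet differentiable maps: the functional $E^{\mbox{\rm\scriptsize IK}}$ of \eqref{IK:energy} is, on $U^m_b\times(\mathring H^1\times(H^1)^N)$, a sum of terms quadratic in $(\nabla\boldsymbol\phi,(\phi_1,\ldots,\phi_N))$ with coefficients depending smoothly on $H$, plus $\frac12\rho g\int\eta^2$, hence Fr\'echet differentiable with derivatives \eqref{IK:deltaE}; and by Lemmas~\ref{H:lem1} and~\ref{H:lem2} the map $(\eta,\phi)\mapsto{\bf S}(\eta)\phi\in\mathring H^1\times(H^1)^N$ is Fr\'echet differentiable, being linear in $\phi$ and, for fixed $\phi$, Fr\'echet differentiable in $\eta$ with derivative characterized by \eqref{H:Frechet}. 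Writing $\boldsymbol\phi={\bf S}(\eta)\phi$, $\dot{\boldsymbol\psi}=D_\eta{\bf S}(\eta)[\dot\eta]\phi$ and $\dot{\boldsymbol\chi}={\bf S}(\eta)\dot\phi$, the chain rule then gives
\[
\rho\,D\mathscr{H}^{\mbox{\rm\scriptsize IK}}(\eta,\phi)[\dot\eta,\dot\phi]
= (\delta_\eta E^{\mbox{\rm\scriptsize IK}},\dot\eta)_{L^2}
 + (\delta_{\boldsymbol\phi}E^{\mbox{\rm\scriptsize IK}},\dot{\boldsymbol\psi}+\dot{\boldsymbol\chi})_{L^2},
\]
all pairings being those of $L^2(\mathbf{R}^n)$, understood in the natural duality sense when the regularity of $\phi$ is low.

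The key step is the following reduction. By \eqref{IK:deltaE}, $\delta_{\boldsymbol\phi}E^{\mbox{\rm\scriptsize IK}}=\rho\,L(H,b)\boldsymbol\phi$, and since $\boldsymbol\phi$ solves \eqref{H:eqCV} the constraint \eqref{IK:comp4} holds, i.e. $L(H,b)\boldsymbol\phi=(L_0(H,b)\boldsymbol\phi)\,\boldsymbol l(H)$. Hence for any admissible $\dot{\boldsymbol v}$,
\[
(\delta_{\boldsymbol\phi}E^{\mbox{\rm\scriptsize IK}},\dot{\boldsymbol v})_{L^2}
= \rho\bigl((L_0(H,b)\boldsymbol\phi)\,\boldsymbol l(H),\dot{\boldsymbol v}\bigr)_{L^2}
= \rho\bigl(L_0(H,b)\boldsymbol\phi,\ \boldsymbol l(H)\cdot\dot{\boldsymbol v}\bigr)_{L^2},
\]
so only the scalar quantity $\boldsymbol l(H)\cdot\dot{\boldsymbol v}$ enters. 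Taking $\dot{\boldsymbol v}=\dot{\boldsymbol\chi}$, the first equation of \eqref{H:eqCV} with $\dot\phi$ in place of $\phi$ gives $\boldsymbol l(H)\cdot\dot{\boldsymbol\chi}=\dot\phi$; taking $\dot{\boldsymbol v}=\dot{\boldsymbol\psi}$, the first equation of \eqref{H:Frechet} gives $\boldsymbol l(H)\cdot\dot{\boldsymbol\psi}=-(\boldsymbol l'(H)\cdot\boldsymbol\phi)\dot\eta$. Substituting,
\[
\rho\,D\mathscr{H}^{\mbox{\rm\scriptsize IK}}(\eta,\phi)[\dot\eta,\dot\phi]
= \bigl(\delta_\eta E^{\mbox{\rm\scriptsize IK}}
 -\rho\,(\boldsymbol l'(H)\cdot\boldsymbol\phi)\,L_0(H,b)\boldsymbol\phi,\ \dot\eta\bigr)_{L^2}
 + \rho\bigl(L_0(H,b)\boldsymbol\phi,\ \dot\phi\bigr)_{L^2},
\]
which is exactly the asserted pair of variational derivatives after dividing by $\rho$. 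Observe that the awkward second equation of \eqref{H:Frechet}, and with it $D_H\mathcal{L}$, is never used in the computation itself; it serves only to guarantee, via Lemma~\ref{IK:lem1}, that $\dot{\boldsymbol\psi}$ and $\dot{\boldsymbol\chi}$ exist in the appropriate spaces.

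I expect the real work to lie not in this algebra but in legitimizing the chain rule and the pairings at the stated low regularity $(\eta,\phi)\in U^m_b\times\mathring H^1$. One must check that $\delta_{\boldsymbol\phi}E^{\mbox{\rm\scriptsize IK}}=\rho L\boldsymbol\phi$ and $L_0\boldsymbol\phi$ define bounded functionals on the relevant spaces — here the fact $p_0=0$, which kills the terms carrying a factor $p_i$ in $L_{0j}$, lets one rewrite $(L_0\boldsymbol\phi,\dot\phi)_{L^2}$ after one integration by parts as the integral of $\nabla\dot\phi$ against an $L^2$ vector field — that $E^{\mbox{\rm\scriptsize IK}}$ is genuinely Fr\'echet, not merely G\^ateaux, differentiable, and that the remainders produced by composing the two differentiable maps are $o(\|(\dot\eta,\dot\phi)\|)$, which uses the quantitative estimate of Lemma~\ref{IK:lem1} applied to the linearized elliptic systems \eqref{H:eqCV} and \eqref{H:Frechet} together with standard product estimates in Sobolev spaces. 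Once these routine points are in place, the computation above yields the stated formulas for $\delta_\phi\mathscr{H}^{\mbox{\rm\scriptsize IK}}$ and $\delta_\eta\mathscr{H}^{\mbox{\rm\scriptsize IK}}$.
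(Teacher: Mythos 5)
Your proposal is correct and follows essentially the same route as the paper: the chain rule applied to $E^{\mbox{\rm\scriptsize IK}}(\eta,{\bf S}(\eta)\phi)$, the identification $\delta_{\boldsymbol\phi}E^{\mbox{\rm\scriptsize IK}}=\rho L(H,b)\boldsymbol\phi=\rho(L_0(H,b)\boldsymbol\phi)\boldsymbol l(H)$, and the reduction of the pairings via the first equations of \eqref{H:eqCV} and \eqref{H:Frechet}, with the $L^2$ pairings replaced by suitable duality products at the low regularity $\phi\in\mathring H^1$. Your observation that the second equation of \eqref{H:Frechet} enters only through the existence of $D_\eta{\bf S}(\eta)[\dot\eta]\phi$ is also consistent with the paper's argument.
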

\begin{proof}[{\bf Proof}.]
Let us calculate Fr\'echet derivatives of the Hamiltonian $\mathscr{H}^{\mbox{\rm\scriptsize IK}}(\eta,\phi)$. 
Let us consider first 
$U^m_b \times H^2 \ni (\eta,\phi) \mapsto \mathscr{H}^{\mbox{\rm\scriptsize IK}}(\eta,\phi) $.
For any $\dot{\phi}\in {H}^2$, we see that 
\begin{align*}
D_\phi\mathscr{H}^{\mbox{\rm\scriptsize IK}}(\eta,\phi)[\dot{\phi}]
&= \frac{1}{\rho}(D_{\boldsymbol{\phi}} E^{\mbox{\rm\scriptsize IK}})
 (\eta,{\bf S}(\eta)\phi)[{\bf S}(\eta)\dot{\phi}] \\
&= \frac{1}{\rho}( (\delta_{\boldsymbol{\phi}} E^{\mbox{\rm\scriptsize IK}})(\eta,\boldsymbol{\phi}),
 {\bf S}(\eta)\dot{\phi})_{L^2} \\
&= (L(H,b)\boldsymbol{\phi},{\bf S}(\eta)\dot{\phi})_{L^2} \\
&= (\bigl(L_0(H,b)\boldsymbol{\phi}\bigr)\boldsymbol{l}(H), {\bf S}(\eta)\dot{\phi})_{L^2} \\
&= (L_0(H,b)\boldsymbol{\phi}, \boldsymbol{l}(H)\cdot {\bf S}(\eta)\dot{\phi})_{L^2} \\
&= ( L_0(H,b)\boldsymbol{\phi}, \dot{\phi})_{L^2},
\end{align*}
where we used \eqref{IK:deltaE} and Lemma~\ref{H:lem1}. 
The above calculations are also valid when $(\phi,\dot{\phi})\in  \mathring{H}^1\times \mathring{H}^1$, 
provided we replace the $L^2$ inner products with the $\mathscr{X}^\prime$--$\mathscr{X}$ duality product 
where $\mathscr{X} = \mathring{H}^1\times (H^1)^N$ for the first lines, and $\mathscr{X}= \mathring{H}^1$ 
for the last line. 
This gives the first equation of the lemma.

Similarly, for any $(\eta,\phi)\in U_b^m\times \mathring{H}^2$ and $\dot{\eta}\in H^m$ we see that 
\begin{align*}
D_\eta\mathscr{H}^{\mbox{\rm\scriptsize IK}}(\eta,\phi)[\dot{\eta}]
= \frac{1}{\rho}(D_\eta E^{\mbox{\rm\scriptsize IK}})(\eta,{\bf S}(\eta)\phi)[\dot{\eta}]
 + \frac{1}{\rho}(D_{\boldsymbol{\phi}} E^{\mbox{\rm\scriptsize IK}})(\eta,{\bf S}(\eta)\phi)
  [D_\eta {\bf S}(\eta)[\dot{\eta}]\phi].
\end{align*}
Here, we have 
\begin{align*}
\frac{1}{\rho}(D_{\boldsymbol{\phi}} E^{\mbox{\rm\scriptsize IK}})(\eta,{\bf S}(\eta)\phi)
  [D_\eta {\bf S}(\eta)[\dot{\eta}]\phi]
&= \frac{1}{\rho}( (\delta_{\boldsymbol{\phi}} E^{\mbox{\rm\scriptsize IK}})(\eta,\boldsymbol{\phi}),
  D_\eta {\bf S}(\eta)[\dot{\eta}]\phi)_{L^2} \\
&= ( L(H,b)\boldsymbol{\phi}, D_\eta {\bf S}(\eta)[\dot{\eta}]\phi)_{L^2} \\
&= ( L_0(H,b)\boldsymbol{\phi}, \boldsymbol{l}(H)\cdot D_\eta {\bf S}(\eta)[\dot{\eta}]\phi)_{L^2} \\
&= -( L_0(H,b)\boldsymbol{\phi}, (\boldsymbol{l}'(H)\cdot {\bf S}(\eta)\phi) \dot{\eta})_{L^2} \\
&= - ( (\boldsymbol{l}'(H)\cdot\boldsymbol{\phi})L_0(H,b)\boldsymbol{\phi}, \dot{\eta})_{L^2},
\end{align*}
where we used the identity
\[
\boldsymbol{l}(H)\cdot D_\eta {\bf S}(\eta)[\dot{\eta}]\phi
 + (\boldsymbol{l}'(H)\cdot {\bf S}(\eta)\phi) \dot{\eta} = 0,
\]
stemming from~\eqref{H:Frechet}. 
Again, the above identities are still valid for $(\eta,\phi)\in U_b^m\times \mathring{H}^1$ 
provided we replace the $L^2$ inner products with suitable duality products. 
This concludes the proof of the Fr\'echet differentiability, and the second equation of the lemma. 
\end{proof}

Now, we are ready to show our main result in this section.

\begin{theorem}
Let $m$ be an integer such that $m>\frac{n}{2}+1$ and $b\in W^{m,\infty}$. 
Then, the Isobe--Kakinuma model~\eqref{intro:IK model} is equivalent to Hamilton's canonical equations
\begin{equation}\label{H:CF}
\partial_t\eta = \frac{\delta\mathscr{H}^{\mbox{\rm\scriptsize IK}}}{\delta\phi}, \quad
\partial_t\phi = -\frac{\delta\mathscr{H}^{\mbox{\rm\scriptsize IK}}}{\delta\eta},
\end{equation}
with $\mathscr{H}^{\mbox{\rm\scriptsize IK}}$ defined in~\eqref{H:H} as long as $\eta(\cdot,t) \in U^m_b$ 
and $\phi(\cdot,t)\in \mathring{H}^1$. 
More precisely, for any regular solution $(\eta,\boldsymbol{\phi})$ to the Isobe--Kakinuma model 
\eqref{intro:IK model}, 
if we define $\phi$ by~\eqref{H:CV}, then $(\eta,\phi)$ satisfies Hamilton's canonical 
equations~\eqref{H:CF}. 
Conversely, for any regular solution $(\eta,\phi)$ to Hamilton's canonical equations~\eqref{H:CF}, 
if we define $\boldsymbol{\phi}$ by $\boldsymbol{\phi}={\bf S}(\eta)\phi$, 
then $(\eta,\boldsymbol{\phi})$ satisfies the Isobe--Kakinuma model~\eqref{intro:IK model}. 
\end{theorem}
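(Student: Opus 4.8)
The plan is to reduce Hamilton's equations~\eqref{H:CF} to the system~\eqref{IK:IK model}, which is just~\eqref{intro:IK model} rewritten with the operators $L_{ij}$, by substituting the variational derivatives already computed in Lemma~\ref{H:lem3}. The only conceptual point is that, in either direction, the auxiliary unknown $\boldsymbol{\phi}$ appearing in the model coincides with the one produced by the solution operator ${\bf S}(\eta)$; once this is in place, the rest is the chain rule applied to $\phi=\boldsymbol{l}(H)\cdot\boldsymbol{\phi}$ together with the algebraic identity $L(H,b)\boldsymbol{\phi}=\bigl(L_0(H,b)\boldsymbol{\phi}\bigr)\boldsymbol{l}(H)$ recorded in Lemma~\ref{H:lem1}.

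First I would take a regular solution $(\eta,\boldsymbol{\phi})$ of~\eqref{intro:IK model} and set $\phi=\boldsymbol{l}(H)\cdot\boldsymbol{\phi}$. Eliminating $\partial_t\eta$ between the $i$-th and the $0$-th equations of~\eqref{IK:IK model}, as recalled in Section~\ref{section:IK}, shows that $\boldsymbol{\phi}$ satisfies $\mathcal{L}(H,b)\boldsymbol{\phi}=\mathbf{0}$; hence $(\eta,\phi,\boldsymbol{\phi})$ solves~\eqref{H:eqCV}, and the uniqueness statement of Lemma~\ref{IK:lem1} (equivalently Lemma~\ref{H:lem1}) gives $\boldsymbol{\phi}={\bf S}(\eta)\phi$. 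Since $p_0=0$, the $i=0$ equation of~\eqref{IK:IK model} reads $\partial_t\eta=L_0(H,b)\boldsymbol{\phi}$, which by Lemma~\ref{H:lem3} is exactly $\partial_t\eta=\delta_\phi\mathscr{H}^{\mbox{\rm\scriptsize IK}}$. For the second canonical equation I differentiate $\phi=\boldsymbol{l}(H)\cdot\boldsymbol{\phi}$ in time, using $\partial_t H=\partial_t\eta$, to obtain $\sum_{j}H^{p_j}\partial_t\phi_j=\partial_t\phi-(\boldsymbol{l}'(H)\cdot\boldsymbol{\phi})\partial_t\eta$; combining this with the last equation of~\eqref{IK:IK model} (which, by~\eqref{IK:deltaE}, says $\sum_{j}H^{p_j}\partial_t\phi_j=-\tfrac1\rho\delta_\eta E^{\mbox{\rm\scriptsize IK}}$) and with $\partial_t\eta=L_0(H,b)\boldsymbol{\phi}$ yields $\partial_t\phi=-\tfrac1\rho\delta_\eta E^{\mbox{\rm\scriptsize IK}}+(\boldsymbol{l}'(H)\cdot\boldsymbol{\phi})L_0(H,b)\boldsymbol{\phi}$, which is $-\delta_\eta\mathscr{H}^{\mbox{\rm\scriptsize IK}}$ by Lemma~\ref{H:lem3}.

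Conversely, given a regular solution $(\eta,\phi)$ of~\eqref{H:CF} I set $\boldsymbol{\phi}={\bf S}(\eta)\phi$. By construction $\boldsymbol{\phi}$ solves~\eqref{H:eqCV}, so $\boldsymbol{l}(H)\cdot\boldsymbol{\phi}=\phi$ holds for all $t$ and, by Lemma~\ref{H:lem1}, $\sum_j L_{ij}(H,b)\phi_j=H^{p_i}L_0(H,b)\boldsymbol{\phi}$ for every $i$. The first canonical equation and Lemma~\ref{H:lem3} give $\partial_t\eta=L_0(H,b)\boldsymbol{\phi}$, whence $H^{p_i}\partial_t\eta=\sum_j L_{ij}(H,b)\phi_j$ for $i=0,1,\ldots,N$, which are precisely the first $N+1$ equations of~\eqref{IK:IK model}. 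Differentiating $\phi=\boldsymbol{l}(H)\cdot\boldsymbol{\phi}$ in time and inserting the two canonical equations as evaluated via Lemma~\ref{H:lem3} makes the term $(\boldsymbol{l}'(H)\cdot\boldsymbol{\phi})L_0(H,b)\boldsymbol{\phi}$ cancel and leaves $\sum_j H^{p_j}\partial_t\phi_j=-\tfrac1\rho\delta_\eta E^{\mbox{\rm\scriptsize IK}}$, which by~\eqref{IK:deltaE} is the last equation of~\eqref{IK:IK model}. Since~\eqref{IK:IK model} is equivalent to~\eqref{intro:IK model}, both implications follow.

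I do not expect a serious obstacle here: the analytic content—solvability and estimates for the elliptic system defining ${\bf S}(\eta)$, its Fr\'echet differentiability, and the computation of $\delta_\phi\mathscr{H}^{\mbox{\rm\scriptsize IK}}$ and $\delta_\eta\mathscr{H}^{\mbox{\rm\scriptsize IK}}$ (where the self-adjointness $L_{ij}^*=L_{ji}$ enters)—is already dispatched in Lemmas~\ref{IK:lem1}--\ref{H:lem3}. The only point requiring care is the meaning of ``regular solution'': one must fix a function class in which $\eta(\cdot,t)\in U^m_b$, $\phi(\cdot,t)\in\mathring{H}^1$, the time derivatives $\partial_t\eta$, $\partial_t\phi$, $\partial_t\boldsymbol{\phi}$ exist in the relevant Sobolev spaces, and the chain rule $\partial_t\boldsymbol{\phi}=D_\eta{\bf S}(\eta)[\partial_t\eta]\phi+{\bf S}(\eta)\partial_t\phi$ (using Lemma~\ref{H:lem2}) together with $\partial_t\bigl(\boldsymbol{l}(H)\cdot\boldsymbol{\phi}\bigr)=(\boldsymbol{l}'(H)\cdot\boldsymbol{\phi})\partial_t\eta+\boldsymbol{l}(H)\cdot\partial_t\boldsymbol{\phi}$ is legitimate; in such a class all the manipulations above are justified.
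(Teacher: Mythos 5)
Your proposal is correct and follows essentially the same route as the paper: identify $\partial_t\eta=L_0(H,b)\boldsymbol{\phi}$ from the $i=0$ equation, differentiate $\phi=\boldsymbol{l}(H)\cdot\boldsymbol{\phi}$ in time, and match against the variational derivatives from Lemma~\ref{H:lem3}. Your one addition — explicitly verifying via the compatibility relations~\eqref{IK:comp} that the $\boldsymbol{\phi}$ of the model coincides with ${\bf S}(\eta)\phi$ before invoking Lemma~\ref{H:lem3} — is a point the paper leaves implicit (having recalled it in Section~\ref{section:IK}), and making it explicit is a small improvement rather than a departure.
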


\begin{proof}[{\bf Proof}.]
Suppose that $(\eta,\boldsymbol{\phi})$ is a solution to the Isobe--Kakinuma model~\eqref{intro:IK model}. 
Then, it satisfies~\eqref{IK:IK model2}, particularly, we have 
\begin{equation}\label{H:eq1}
\partial_t\eta = L_0(H,b)\boldsymbol{\phi}.
\end{equation}
It follows from~\eqref{H:CV} and~\eqref{IK:IK model2} that 
\begin{align*}
\partial_t\phi 
&= \boldsymbol{l}(H)\cdot\partial_t\boldsymbol{\phi}
 + (\boldsymbol{l}'(H)\cdot\boldsymbol{\phi})\partial_t\eta \\
&= -\frac{1}{\rho} (\delta_\eta E^{\mbox{\rm\scriptsize IK}})(\eta,\boldsymbol{\phi})
 + (\boldsymbol{l}'(H)\cdot\boldsymbol{\phi})L_0(H,b)\boldsymbol{\phi}.
\end{align*}
These equations together with Lemma~\ref{H:lem3} show that $(\eta,\phi)$ satisfies~\eqref{H:CF}.

Conversely, suppose that $(\eta,\phi)$ satisfies Hamilton's canonical equations~\eqref{H:CF} and 
put $\boldsymbol{\phi}={\bf S}(\eta)\phi$. 
Then, it follows from~\eqref{H:CF} and Lemma~\ref{H:lem3} that we have~\eqref{H:eq1}. 
This fact and Lemma~\ref{H:lem1} imply the equation 
\[
\boldsymbol{l}(H)\partial_t\eta = L(H,b)\boldsymbol{\phi}
 = \frac{1}{\rho}\delta_{\boldsymbol{\phi}}E^{\mbox{\rm\scriptsize IK}}(\eta,\boldsymbol{\phi}).
\]
We see also that 
\begin{align*}
\boldsymbol{l}(H)\cdot\partial_t\boldsymbol{\phi}
&= \partial_t\phi - (\boldsymbol{l}'(H)\cdot\boldsymbol{\phi})\partial_t\eta
 = - \frac{1}{\rho}\delta_{\eta}E^{\mbox{\rm\scriptsize IK}}(\eta,\boldsymbol{\phi}),
\end{align*}
where we used~\eqref{H:CF} and Lemma~\ref{H:lem3}. 
Therefore, $(\eta,\boldsymbol{\phi})$ satisfies~\eqref{IK:IK model2}, that is, 
the Isobe--Kakinuma model~\eqref{intro:IK model}. 
\end{proof}

\section{Consistency}
\label{section:C}
As aforementioned, it was shown in~\cite{Iguchi2018-1, Iguchi2018-2} that the Isobe--Kakinuma model 
\eqref{intro:IK model} is a higher order shallow water approximation for the water wave problem 
in the strongly nonlinear regime. 
In this section, we will show that the canonical Hamiltonian structure exhibited 
in the previous section is consistent with this approximation, 
in the sense that the Hamiltonian of the Isobe--Kakinuma model, 
$\mathscr{H}^{\mbox{\rm\scriptsize IK}}(\eta,\phi)$, 
approximates the Hamiltonian of the water wave problem, 
$\mathscr{H}(\eta,\phi)$, in the shallow water regime.

In order to provide quantitative results, we first rewrite the equations in a nondimensional form. 
Let $\lambda$ be the typical wavelength of the wave.
Recalling that $h$ is the mean depth, we introduce the nondimensional aspect ratio 
\[
\delta = \frac{h}{\lambda} ,
\]
measuring the shallowness of the water. 
We then rescale the physical variables by 
\[
{\boldsymbol x} = \lambda\tilde {\boldsymbol x}, \quad z = h\tilde z, \quad 
t = \frac{\lambda}{\sqrt{gh}}\tilde t, \quad \eta = h\tilde\eta, \quad 
b= h\tilde b, \quad \Phi=\lambda\sqrt{gh}\tilde\Phi.
\]
Under these rescaling, after dropping the tildes for the sake of readability, 
the basic equations for water waves~\eqref{intro:Laplace}--\eqref{intro:BC2} 
are rewritten in a non-dimensional form 
\begin{equation}\label{C:Laplace}
 \Delta\Phi+\delta^{-2}\partial_z^2\Phi=0 \makebox[3em]{in}  \Omega(t), \; t>0, 
\end{equation}
\begin{equation}\label{C:BC1}
\begin{cases}
 \partial_t\eta + \nabla\Phi\cdot\nabla\eta - \delta^{-2}\partial_z\Phi = 0
  & \mbox{on}\quad  \Gamma(t), \; t>0,  \\
 \displaystyle
 \partial_t\Phi + \frac12\bigl( |\nabla\Phi|^2 +  \delta^{-2}(\partial_z\Phi)^2  \bigr) + \eta = 0
  & \mbox{on}\quad  \Gamma(t), \; t>0, 
\end{cases}
\end{equation}
\begin{equation}\label{C:BC2}
\nabla\Phi\cdot\nabla b - \delta^{-2} \partial_z\Phi = 0 \makebox[3em]{on}  \Sigma, \; t>0, 
\end{equation}
denoting $\Omega(t)$, $\Gamma(t)$, and $\Sigma$ the rescaled water region, water surface, and 
bottom of the water at time $t$, respectively. 
Specifically, the rescaled water surface and bottom of the water are represented as 
$z=\eta(\boldsymbol{x},t)$ and $z=-1+b(\boldsymbol{x})$, respectively. 
The corresponding dimensionless Zakharov--Craig--Sulem formulation is 
\begin{equation}\label{C:ZCS}
\begin{cases}
 \partial_t\eta-\Lambda^\delta(\eta,b)\phi = 0  & \mbox{on}\quad \mathbf{R}^n, \; t>0, \\[.5ex]
 \partial_t\phi + \eta + \dfrac12|\nabla\phi|^2
  - \dfrac{\delta^2}2\dfrac{\bigl(\Lambda^\delta(\eta,b)\phi + \nabla\eta\cdot\nabla\phi\bigr)^2
  }{1+\delta^2|\nabla\eta|^2} = 0 & \mbox{on}\quad \mathbf{R}^n, \; t>0,
\end{cases}
\end{equation}
where 
\begin{equation}\label{C:CV}
\phi({\boldsymbol x},t) = \Phi({\boldsymbol x},\eta({\boldsymbol x},t),t)
\end{equation}
is the trace of the velocity potential on the water surface, and 
$\Lambda^\delta(\eta,b)$ is the dimensionless Dirichlet-to-Neumann map for Laplace's equation, 
namely, it is defined by
\[
\Lambda^\delta(\eta,b)\phi = \delta^{-2} (\partial_z\Phi)|_{z=\eta} - \nabla\eta\cdot(\nabla\Phi)|_{z=\eta}, 
\]
where $\Phi$ is the unique  solution to the boundary value problem 
of the scaled Laplace's equation~\eqref{C:Laplace} under the boundary conditions 
\eqref{C:BC2} and \eqref{C:CV}. 
With this rescaling and definitions, the Hamiltonian of the water wave system is given by 
\[
\mathscr{H}^\delta(\eta,\phi)
= \frac12 \iint_{\Omega(t)} \bigl( |\nabla\Phi|^2 + \delta^{-2}(\partial_z\Phi)^2 \bigr)
  \,{\rm d}{\boldsymbol x}{\rm d}z + \frac{1}{2}\int_{\mathbf{R}^n}\eta^2 \,{\rm d}{\boldsymbol x} .
\]

In order to rewrite the Isobe--Kakinuma model~\eqref{intro:IK model} in dimensionless form, 
we need to rescale the unknown variables $(\phi_0,\phi_1,\ldots,\phi_N)$, 
depending on the choice of function system $\{\Psi_i\}$. 
In view of~\eqref{intro:app}, we rescale them by
\[
\phi_i = \frac{\lambda\sqrt{gh}}{\lambda^{p_i}} \tilde \phi_i \qquad\mbox{for}\quad i=0,1,\ldots,N, 
\]
so that 
\begin{equation}\label{C:app}
\Phi^{\rm app}({\boldsymbol x}, z, t)
= \lambda\sqrt{gh}\;\tilde\Phi^{\rm app}(\tilde{\boldsymbol x},\tilde z,\tilde t) 
= \lambda\sqrt{gh}\;\biggl(\sum_{i=0}^N\delta^{p_i}(\tilde z+1-\tilde b(\tilde{\boldsymbol x}))^{p_i} 
 \phi_i(\tilde{\boldsymbol x},\tilde t)\biggr).
\end{equation}
As before, we will henceforth drop the tildes for the sake of readability. 
It is also convenient to introduce the notation 
\[
\phi_i^\delta=\delta^{p_i}\tilde\phi_i \qquad\mbox{for}\quad i=0,1,\ldots,N,
\]
so that the Isobe--Kakinuma model~\eqref{intro:IK model} in rescaled variables is 
\begin{equation}\label{C:IK model}
\left\{
 \begin{array}{l}
  \displaystyle
  H^{p_i} \partial_t \eta + \sum_{j=0}^N \left\{ \nabla \cdot \left(
   \frac{1}{p_i+p_j+1} H^{p_i+p_j+1} \nabla\phi_j^\delta
   - \frac{p_j}{p_i+p_j} H^{p_i+p_j} \phi_j^\delta \nabla b \right) \right.\\
  \displaystyle\phantom{ H^{p_i} \partial_t \eta + \sum_{j=0}^N \biggl\{ }
   + \left. \frac{p_i}{p_i+p_j} H^{p_i+p_j} \nabla b \cdot \nabla\phi_j^\delta
   - \frac{p_ip_j}{p_i+p_j-1} H^{p_i+p_j-1} (\delta^{-2} + |\nabla b|^2) \phi_j^\delta \right\} = 0 \\
  \makebox[28em]{}\mbox{for}\quad i=0,1,\ldots,N, \\[1ex]
  \displaystyle
  \sum_{j=0}^N H^{p_j} \partial_t \phi_j^\delta + \eta 
   + \frac12 \left\{ \left| \sum_{j=0}^N 
   ( H^{p_j}\nabla\phi_j^\delta - p_j H^{p_j-1}\phi_j^\delta\nabla b ) \right|^2 
   + \delta^{-2}\left( \sum_{j=0}^N p_j H^{p_j-1} \phi_j^\delta \right)^2 \right\} = 0,
 \end{array}
\right.
\end{equation}
where $H({\boldsymbol x},t) = 1 + \eta({\boldsymbol x},t) - b({\boldsymbol x})$. 
We also use the notations 
$\boldsymbol{\phi}^\delta=(\phi_0^\delta,\phi_1^\delta,\dots,\phi_N^\delta)^{\rm T}$ 
and $L^\delta=L^\delta(H,b)=(L_{ij}^\delta(H,b))_{0\leq i,j\leq N}$, where 
\begin{align}
L_{ij}^\delta\psi_j
&= -\nabla\cdot\biggl(
   \frac{1}{p_i+p_j+1}H^{p_i+p_j+1}\nabla\psi_j
   -\frac{p_j}{p_i+p_j}H^{p_i+p_j}\psi_j\nabla b\biggr) \nonumber \\[0.5ex]
&\quad\,
  -\frac{p_i}{p_i+p_j}H^{p_i+p_j}\nabla b\cdot\nabla\psi_j
   +\frac{p_ip_j}{p_i+p_j-1}H^{p_i+p_j-1}(\delta^{-2}+|\nabla b|^2)\psi_j.
  \label{C:L}
\end{align}
Then, \eqref{C:IK model} can be written in a compact form 
\begin{equation}\label{C:IK model2}
\begin{pmatrix}
 0 & -\boldsymbol{l}(H)^{\rm T} \\
 \boldsymbol{l}(H) & O
\end{pmatrix}
\partial_t
\begin{pmatrix}
 \eta \\
 \boldsymbol{\phi}^\delta
\end{pmatrix}
= 
\begin{pmatrix}
 \delta_{\eta} E^{\mbox{\rm\scriptsize IK},\delta} \\
 \delta_{\boldsymbol{\phi}^\delta} E^{\mbox{\rm\scriptsize IK},\delta}
\end{pmatrix},
\end{equation}
where 
\begin{align}
E^{\mbox{\rm\scriptsize IK},\delta}(\eta,\boldsymbol{\phi}^\delta)
&= \frac{1}{2}\int_{\mathbf{R}^n}\left\{
 \sum_{i,j=0}^N\left(
  \frac{1}{p_i+p_j+1}H^{p_i+p_j+1}\nabla\phi_i^\delta\cdot\nabla\phi_j^\delta
  -\frac{2p_i}{p_i+p_j}H^{p_i+p_j}\phi_i^\delta\nabla b\cdot\nabla\phi_j^\delta \right. \right. \nonumber \\
& \left. \phantom{\sum_{i,j=0}^N}\makebox[6em]{}
 + \frac{p_ip_j}{p_i+p_j-1}H^{p_i+p_j-1}(\delta^{-2}+|\nabla b|^2)\phi_i^\delta\phi_j^\delta\biggr)
 + \eta^2\right\}{\rm d}{\boldsymbol x}.
\end{align}
Then, we define the Hamiltonian 
\[
\mathscr{H}^{\mbox{\rm\scriptsize IK},\delta}(\eta,\phi)
= E^{\mbox{\rm\scriptsize IK},\delta}(\eta,\boldsymbol{\phi}^\delta), 
\]
where $\boldsymbol{\phi}^\delta$ is the solution to 
\begin{equation}\label{C:S}
\begin{cases}
 \boldsymbol{l}(H)\cdot\boldsymbol{\phi}^\delta = \phi, \\
 L^\delta(H,b)\boldsymbol{\phi}^\delta
  = \bigl(L_0^\delta(H,b)\boldsymbol{\phi}^\delta\bigr)\boldsymbol{l}(H).
\end{cases}
\end{equation}
Here, we used the notation $L_0^\delta=(L_{00}^\delta,\dots,L_{0N}^\delta)$.
We recall that $\boldsymbol{\phi}^\delta$ is uniquely determined by~\eqref{C:S} thanks to Lemma~\ref{H:lem1}.

To analyze the consistency of the Hamiltonian in the shallow water regime, 
we will further restrict ourselves to the following two cases: 
\begin{itemize}
\item[(H1)]
In the case of the flat bottom $b({\boldsymbol x})\equiv0$, $p_i=2i$ for $i=0,1,\ldots,N$.
\item[(H2)]
In the case with general bottom topographies, $p_i=i$ for $i=0,1,\ldots,N$.
\end{itemize}
We are now in position to state the consistency of the Hamiltonian of the Isobe--Kakinuma model 
with respect to Zakharov's Hamiltonian of the water wave problem in the shallow water regime.

\begin{theorem}\label{C:consistency-Hamiltonian}
Let $c_0,M$ be positive constants and $m>\frac{n}{2}+1$ an integer such that $m\geq 4(N+1)$ 
in the case {\rm (H1)} and $m\geq 4([\frac{N}{2}]+1)$ in the case {\rm (H2)}. 
There exists a positive constant $C$ such that if $\eta\in H^m$ and $b\in W^{m+1,\infty}$ satisfy 
\[
\begin{cases}
 \|\eta\|_m + \|b\|_{W^{m+1,\infty}} \leq M, \\
 c_0\leq H({\boldsymbol x}) = 1+\eta({\boldsymbol x})-b({\boldsymbol x})
  \quad\mbox{for}\quad {\boldsymbol x}\in\mathbf{R}^n,
\end{cases}
\]
then for any $\delta\in(0,1]$ and any $\phi\in \mathring{H}^{m}$, we have 
\[
|\mathscr{H}^\delta(\eta,\phi) -\mathscr{H}^{\mbox{\rm\scriptsize IK},\delta}(\eta,\phi) |
\leq
\begin{cases}
 C\|\nabla\phi\|_{4N+3}\|\nabla\phi\|_{0}\, \delta^{4N+2} & \text{ in the case {\rm (H1)}}, \\
 C\|\nabla \phi\|_{4[\frac{N}{2}]+3}\|\nabla\phi\|_{0}\, \delta^{4[\frac{N}{2}]+2}
   & \text{ in the case {\rm (H2)}}.
\end{cases}
\]
\end{theorem}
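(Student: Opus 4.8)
The plan is to exploit the common variational origin of the two Hamiltonians: both are minima of one and the same weighted Dirichlet integral over two nested affine sets, so that their difference is a best--approximation error which can be estimated through the shallow water expansion of the velocity potential.

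\smallskip
\noindent\textbf{Step 1: variational reduction.}
Over the rescaled fluid domain $\Omega=\{(\boldsymbol x,z):-1+b(\boldsymbol x)<z<\eta(\boldsymbol x)\}$ introduce
\[
\mathcal E^\delta[\Psi]=\frac12\iint_\Omega\bigl(|\nabla\Psi|^2+\delta^{-2}(\partial_z\Psi)^2\bigr)\,{\rm d}\boldsymbol x\,{\rm d}z .
\]
By the scaled Luke/Dirichlet variational principle, $\mathscr H^\delta(\eta,\phi)-\frac12\|\eta\|_0^2=\mathcal E^\delta[\Phi]$ is the minimum of $\mathcal E^\delta$ over $\{\Psi:\Psi|_{z=\eta}=\phi\}$, the minimizer being the solution $\Phi$ of \eqref{C:Laplace}--\eqref{C:BC2} (the bottom condition being the natural boundary condition of the functional). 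Likewise, by the rescaled analogue of \eqref{IK:deltaE} together with \eqref{C:S}, the vector $\boldsymbol\phi^\delta$ solving \eqref{C:S} is exactly the minimizer of $\mathcal E^\delta$ over the smaller affine set
\[
\mathcal V^{\rm app}=\Bigl\{\,\textstyle\sum_{i=0}^N(z+1-b)^{p_i}\psi_i(\boldsymbol x)\ :\ \sum_{i=0}^N H^{p_i}\psi_i=\phi\,\Bigr\},
\]
so that $\mathscr H^{\mbox{\rm\scriptsize IK},\delta}(\eta,\phi)-\frac12\|\eta\|_0^2=\mathcal E^\delta[\Phi^{\rm app}]$ with $\Phi^{\rm app}=\sum_i(z+1-b)^{p_i}\phi_i^\delta$. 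Since $\Phi$ is $\mathcal E^\delta$--orthogonal to every function vanishing on $\{z=\eta\}$, the Pythagorean identity yields
\[
0\le \mathscr H^{\mbox{\rm\scriptsize IK},\delta}(\eta,\phi)-\mathscr H^\delta(\eta,\phi)=\mathcal E^\delta[\Phi^{\rm app}-\Phi]=\inf_{\Psi\in\mathcal V^{\rm app}}\mathcal E^\delta[\Psi-\Phi] .
\]
In particular $\mathscr H^{\mbox{\rm\scriptsize IK},\delta}$ always overestimates $\mathscr H^\delta$, and it suffices to exhibit one good competitor $\Psi\in\mathcal V^{\rm app}$.

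\smallskip
\noindent\textbf{Step 2: a competitor from the shallow water expansion.}
In case (H1) take $\Psi=\sum_{m=0}^N\frac{(-\delta^2)^m}{(2m)!}(z+1)^{2m}\Delta^m\tilde f$, which uses only the admissible powers $0,2,\dots,2N$, satisfies $\partial_z\Psi|_{z=-1}=0$, and obeys $\Delta\Psi+\delta^{-2}\partial_z^2\Psi=\frac{(-\delta^2)^N}{(2N)!}(z+1)^{2N}\Delta^{N+1}\tilde f=:R$ by a direct computation. Fixing $\tilde f$ by the trace condition $\Psi|_{z=\eta}=\phi$ puts $\Psi$ in $\mathcal V^{\rm app}$; the resulting equation for $\tilde f$ is elliptic with symbol $\sum_{m=0}^N\frac{(\delta H|\xi|)^{2m}}{(2m)!}\ge 1$ (a partial sum of $\cosh$), hence uniquely solvable with $\|\nabla\tilde f\|_s\le C\|\nabla\phi\|_s$ uniformly for $\delta\in(0,1]$. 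In case (H2) take $\Psi=\sum_{k=0}^N(z+1-b)^k g_k$, where $g_1$ is chosen so that the bottom condition holds exactly at $z=-1+b$, $g_2,\dots,g_N$ are determined by the recursion annihilating the coefficients of $(z+1-b)^0,\dots,(z+1-b)^{N-2}$ in $\Delta\Psi+\delta^{-2}\partial_z^2\Psi$, and $g_0$ is then fixed by $\Psi|_{z=\eta}=\phi$; one checks $g_k=O(\delta^{2\lceil k/2\rceil})$, so that the remainder $R:=\Delta\Psi+\delta^{-2}\partial_z^2\Psi$, supported on the powers $(z+1-b)^{N-1}$ and $(z+1-b)^{N}$, has size $O(\delta^{2[\frac N2]})$ (it picks up two derivatives of $b$ per recursion step, which is the reason for the hypothesis $b\in W^{m+1,\infty}$).

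\smallskip
\noindent\textbf{Step 3: energy estimate of the remainder.}
Set $w=\Psi-\Phi$, so that $w|_{z=\eta}=0$, the bottom condition holds exactly for $w$, and $\Delta w+\delta^{-2}\partial_z^2 w=R$. Pairing with $w$ over $\Omega$ (all boundary terms vanish by the boundary conditions on $w$) gives $2\mathcal E^\delta[w]=-\iint_\Omega Rw$; writing $w(\boldsymbol x,z)=-\int_z^{\eta(\boldsymbol x)}\partial_z w\,{\rm d}z'$ and using Cauchy--Schwarz in $z$ yields $\iint_\Omega|Rw|\le C\|R\|_{L^2(\Omega)}\|\partial_z w\|_{L^2(\Omega)}\le C\delta\|R\|_{L^2(\Omega)}\mathcal E^\delta[w]^{1/2}$, whence $\mathcal E^\delta[w]\le C\delta^2\|R\|_{L^2(\Omega)}^2$. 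Inserting the size of $R$ gives, in case (H1), $\mathcal E^\delta[w]\le C\delta^{4N+2}\|\nabla\tilde f\|_{2N+1}^2\le C\delta^{4N+2}\|\nabla\phi\|_{2N+1}^2$, and an analogous (somewhat longer) computation in case (H2) gives $\mathcal E^\delta[w]\le C\delta^{4[\frac N2]+2}\|\nabla g_0\|^2_{\le 2[\frac N2]+2}$; combined with the uniform bounds on $\tilde f$, $g_0$ and the interpolation inequality $\|u\|_s^2\le\|u\|_{2s}\|u\|_0$ this is precisely the claimed estimate. The regularity thresholds $m\ge4(N+1)$ and $m\ge4([\frac N2]+1)$ are exactly what is needed to make sense of $\Phi$, $\tilde f$, $g_0$ and to carry out the interpolation. (Alternatively, $\mathcal E^\delta[\Phi^{\rm app}-\Phi]$ may be estimated by importing the higher order consistency bounds for the Isobe--Kakinuma model from \cite{Iguchi2018-1,Iguchi2018-2}.)

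\smallskip
\noindent\textbf{Expected main obstacle.}
The delicate part is Steps 2--3: carrying out the shallow water expansion rigorously with constants uniform down to $\delta=0$, and—above all—handling the weight $\delta^{-2}$ correctly. Naively $w=\Psi-\Phi$ has size $\delta^{2N+2}$ in case (H1), which would only give $\mathcal E^\delta[w]\sim\delta^{4N+4}$; the key point is that $\partial_z w$ is also merely $O(\delta^{2N+2})$—differentiating $(z+1)^{2N+2}$ in $z$ lowers the $z$--power but leaves the $\delta$--power intact—so the vertical part $\delta^{-2}(\partial_z w)^2$ is the dominant contribution and is of order $\delta^{4N+2}$, which is the source of the stated exponent. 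Pinning down the remainder $R$ and its exact order in $\delta$ in both regimes, in particular understanding why the odd powers in case (H2) degrade the exponent to $4[\frac N2]+2$, is the principal bookkeeping difficulty; the surrounding elliptic estimates and the interpolation steps are routine.
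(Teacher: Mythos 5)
Your argument takes a genuinely different route from the paper's, and the architecture is sound. The paper does not exploit the variational/orthogonality structure at all: it introduces an intermediate Isobe--Kakinuma approximation $\widetilde{\Phi}^{\rm app}$ built with $2N+2$ terms, splits $\mathscr{H}^\delta-\mathscr{H}^{\mbox{\rm\scriptsize IK},\delta}=I_1+I_2$, bounds $I_1$ \emph{linearly} in the residual $\Phi-\widetilde{\Phi}^{\rm app}$ (which is precisely why the number of terms must be doubled to reach order $\delta^{4N+3}$), and bounds $I_2$ by manipulating the quadratic forms attached to $L^\delta_{ij}$ together with duality pairings in negative Sobolev norms, importing the quantitative elliptic and consistency lemmas of \cite{Iguchi2018-2}. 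You instead observe that both Hamiltonians are minima of the same weighted Dirichlet integral over nested affine sets --- the identification of \eqref{C:S} with the Euler--Lagrange equations of the constrained minimization is correct by \eqref{IK:deltaE} and convexity of the quadratic form --- so that the Pythagorean identity gives $\mathscr{H}^{\mbox{\rm\scriptsize IK},\delta}-\mathscr{H}^\delta=\mathcal{E}^\delta[\Phi^{\rm app}-\Phi]=\inf_{\Psi\in\mathcal{V}^{\rm app}}\mathcal{E}^\delta[\Psi-\Phi]\ge0$, a sign information the paper's proof does not yield. Since the error is now quadratic in the residual and the dominant contribution carries the weight $\delta^{-2}$, a competitor whose Laplacian residual is $O(\delta^{2N})$ in $L^2(\Omega)$ already produces $O(\delta^{4N+2})$; this is why your single-$N$ Boussinesq competitor suffices where the paper needs $2N+2$ terms, and it correctly explains why the theorem beats the naive $O(\delta^{2N+2})$ expectation of Remark 4.3. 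Your (H1) bookkeeping (the residual $\frac{(-\delta^2)^N}{(2N)!}(z+1)^{2N}\Delta^{N+1}\tilde f$, the vanishing of both boundary terms, Poincar\'e in $z$, interpolation) checks out and formally gives the slightly sharper factor $\|\nabla\phi\|_{4N+2}$ in place of $\|\nabla\phi\|_{4N+3}$.

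The one step you assert rather than prove, and which is not routine, is the uniform-in-$\delta$ solvability of the scalar trace equation $\sum_{m=0}^N\frac{(-\delta^2)^m}{(2m)!}H^{2m}\Delta^m\tilde f=\phi$ with $\|\nabla\tilde f\|_s\le C\|\nabla\phi\|_s$, and of its (H2) analogue for $g_0$ (which is coupled back to $g_1,\dots,g_N$ and is again a $\delta$-dependent operator of order roughly $2[\frac{N}{2}]$). The bound $\sum_m\frac{(\delta H|\xi|)^{2m}}{(2m)!}\ge1$ is only a frozen-coefficient symbol statement; for variable $H$ the operator is neither self-adjoint nor obviously coercive uniformly up to $\delta=1$, and establishing the required estimate is work of the same order as Lemma \ref{IK:lem1} or the lemmas of \cite{Iguchi2018-2} on which the paper leans. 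Likewise the (H2) recursion, the orders $g_k=O(\delta^{2\lceil k/2\rceil})$ and $R=O(\delta^{2[\frac{N}{2}]})$, while consistent with the stated exponent, are only sketched. With these two pieces supplied your proof closes, and as an independent alternative to the paper's argument it is attractive: it is self-contained apart from the standard elliptic estimates for $\Phi$, and it delivers the monotonicity $\mathscr{H}^{\mbox{\rm\scriptsize IK},\delta}\ge\mathscr{H}^\delta$ for free.
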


\begin{remark}
Theorem 2.4 in~\cite{Iguchi2018-2} in fact states the stronger result that the difference between 
exact solutions of the water wave problem obtained in \cite{Iguchi2009,Lannes2013} and the corresponding solutions 
of the Isobe--Kakinuma model is bounded with the same order of precision as above on the relevant timescale. 
\end{remark}

\begin{remark}
It is important to notice that the order of the approximation given in Theorem~\ref{C:consistency-Hamiltonian} 
is greater than what we could expect based on~\eqref{C:app}, and in particular greater than the one obtained 
when using the Boussinesq expansion in the flat bottom case (H1): 
\[
\phi_{\rm B}(\tilde t,\tilde{\boldsymbol x})
 = \tilde\Phi^{\rm app}_{\rm B}(\tilde{\boldsymbol x}, \eta(\tilde{\boldsymbol x}, \tilde t),\tilde t) 
 \quad \mbox{with} \quad
\tilde\Phi^{\rm app}_{\rm B}(\tilde{\boldsymbol x}, \tilde z, \tilde t)
 = \sum_{i=0}^N\delta^{2i}(\tilde z+1)^{2i} \frac{(-\Delta)^i\phi_0(\tilde{\boldsymbol x},\tilde t)}{(2i)!}
\]
where $\phi_0$ is the trace of the velocity potential at the bottom. 
Here we can only expect that the approximation is valid up to an error of order $O(\delta^{2N+2})$, 
which coincides with the precision of Theorem~\ref{C:consistency-Hamiltonian} only when $N=0$. 
When $N=0$, we recover that the Saint-Venant or shallow-water equations provide approximate solutions 
with precision $O(\delta^2)$; see~\cite{Iguchi2009, Lannes2013}. 
\end{remark}

\begin{proof}[{\bf Proof of Theorem \ref{C:consistency-Hamiltonian}}.]
We will modify slightly the strategy in~\cite{Iguchi2018-2}. 
We first notice that 
\begin{align*}
&\mathscr{H}^\delta(\eta,\phi)
= \frac12\iint_{\Omega}\bigl( |\nabla\Phi|^2 + \delta^{-2}(\partial_z\Phi)^2 \bigr){\rm d}\boldsymbol{x}{\rm d}z
 + \frac12\int_{\mathbf{R}^n}\eta^2{\rm d}\boldsymbol{x}, \\
&\mathscr{H}^{\mbox{\rm\scriptsize IK},\delta}(\eta,\phi)
= \frac12\iint_{\Omega}\bigl( |\nabla\Phi^{\rm app}|^2
 + \delta^{-2}(\partial_z\Phi^{\rm app})^2 \bigr){\rm d}\boldsymbol{x}{\rm d}z
 + \frac12\int_{\mathbf{R}^n}\eta^2{\rm d}\boldsymbol{x}, 
\end{align*}
where $\Phi$ is the unique solution to the boundary value problem of the scaled Laplace's 
equation~\eqref{C:Laplace} under the boundary conditions \eqref{C:BC2} and \eqref{C:CV}, 
and the approximate velocity potential $\Phi^{\rm app}$ is defined by 
\[
\Phi^{\rm app}({\boldsymbol x},z)
= \sum_{i=0}^{N} ( z+1-b(\boldsymbol{x}))^{p_i}\phi_i^\delta({\boldsymbol x}),
\]
where
${\boldsymbol{\phi}}^\delta=({\phi}_0^\delta,{\phi}_1^\delta,\dots,{\phi}_{N}^\delta)^{\rm T}$ 
is the solution to 
\[
\begin{cases}
\displaystyle
 \sum_{i=0}^{N} H^{p_i} {\phi}_{i}^\delta = \phi, \\[3ex]
\displaystyle
 \sum_{j=0}^{N} {L}_{ij}^\delta(H,b){{\phi}}_j^\delta
  = H^{p_i}\sum_{j=0}^{N} {L}_{0j}^\delta(H,b){{\phi}}_j^\delta
 \makebox[4em]{for} i=0,1,\dots,N.
\end{cases}
\]
We will denote with tildes, as in~\cite{Iguchi2018-2}, the functions obtained 
when replacing $N$ with $2N+2$. 
Hence, $\widetilde{\boldsymbol{\phi}}^\delta 
= (\widetilde{\phi}_0^\delta,\widetilde{\phi}_1^\delta,\dots,\widetilde{\phi}_{2N+2}^\delta)^{\rm T}$ 
is the solution to 
\[
\begin{cases}
\displaystyle
 \sum_{i=0}^{2N+2} H^{p_i} \widetilde{\phi}_{i}^\delta = \phi, \\[3ex]
\displaystyle
 \sum_{j=0}^{2N+2} L_{ij}^\delta(H,b)\widetilde{{\phi}}_j^\delta
  = H^{p_i}\sum_{j=0}^{2N+2} L_{0j}^\delta(H,b)\widetilde{{\phi}}_j^\delta
  \makebox[4em]{for} i=0,1,\dots,2N+2.
\end{cases}
\]
We also introduce, as in \cite{Iguchi2018-2}, a modified approximate velocity potential
$\widetilde{\Phi}^{\rm app}$ by 
\begin{equation}\label{C:def-Phiapp}
\widetilde{\Phi}^{\rm app}({\boldsymbol x},z)
= \sum_{i=0}^{2N+2} ( z+1-b(\boldsymbol{x}))^{p_i}\widetilde\phi_i^\delta({\boldsymbol x}),
\end{equation}
and set $\Phi^{\rm res} = \Phi-\widetilde{\Phi}^{\rm app}$ and 
$\boldsymbol{\varphi}^\delta = (\varphi_0^\delta,\varphi_1^\delta,\ldots,\varphi_N^\delta)^{\rm T}$ with 
$\varphi_j^\delta=\phi_j^\delta-\widetilde{\phi}_j^\delta$ for $j=0,1,\ldots,N$. 
Then, we decompose the difference $\mathscr{H}^\delta - \mathscr{H}^{\mbox{\rm\scriptsize IK},\delta}$ as 
\begin{align*}
&\mathscr{H}^\delta(\eta,\phi) - \mathscr{H}^{\mbox{\rm\scriptsize IK},\delta}(\eta,\phi) \\
&= \frac12\iint_{\Omega}\bigl\{ \bigl( |\nabla\Phi|^2 + \delta^{-2}(\partial_z\Phi)^2 \bigr)
 - \bigl( |\nabla\widetilde{\Phi}^{\rm app}|^2 + \delta^{-2}(\partial_z\widetilde{\Phi}^{\rm app})^2 \bigr)
 \bigr\}{\rm d}\boldsymbol{x}{\rm d}z \\
&\quad\;
 + \frac12\iint_{\Omega}\bigl\{ \bigl( |\nabla\widetilde{\Phi}^{\rm app}|^2
  + \delta^{-2}(\partial_z\widetilde{\Phi}^{\rm app})^2 \bigr)-\bigl( |\nabla{\Phi}^{\rm app}|^2
    + \delta^{-2}(\partial_z{\Phi}^{\rm app})^2 \bigr)\bigr\}{\rm d}\boldsymbol{x}{\rm d}z \\
&= I_1 + I_2. 
\end{align*}

We first evaluate $I_1$. 
It is easy to see that 
\begin{align}
|I_1| &\leq \frac12\bigl\{
 \|\nabla\Phi^{\rm res}\|_{L^2(\Omega)}\bigl(
  \|\nabla\Phi\|_{L^2(\Omega)} + \|\nabla\widetilde{\Phi}^{\rm app}\|_{L^2(\Omega)} \bigr) \nonumber \\
&\quad\;\label{C:I1}
 + \delta^{-2}\|\partial_z\Phi^{\rm res}\|_{L^2(\Omega)}\bigl(
  \|\partial_z\Phi\|_{L^2(\Omega)} + \|\partial_z\widetilde{\Phi}^{\rm app}\|_{L^2(\Omega)} \bigr) \bigr\}.
\end{align}
By using \cite[Lemma~8.1]{Iguchi2018-2} with $k=0$ as well as~\cite[Lemma~6.4]{Iguchi2018-2} with 
$(k,j)=(0,2N+1)$ in the case (H1) and~\cite[Lemma~6.9]{Iguchi2018-2} with $(k,j)=(0,2[\frac{N}{2}]+1)$ 
in the case (H2), we find 
\[
\|\nabla\Phi^{\rm res}\|_{L^2(\Omega)} + \delta^{-1}\|\partial_z\Phi^{\rm res}\|_{L^2(\Omega)} \leq 
\begin{cases}
 C\|\nabla\phi\|_{4N+3}\;\delta^{4N+3} & \text{ in the case (H1)}, \\
 C\|\nabla \phi\|_{4[\frac{N}{2}]+3}\;\delta^{4[\frac{N}{2}]+3}  & \text{ in the case (H2)},
\end{cases}
\]
provided that $m\geq 4(N+1)$ in the case (H1), and $m\geq 4([\frac{N}{2}]+1)$ in the case (H2). 
Here and in what follows, $C$ denotes a positive constant depending on $N$, $m$, $c_0$, and $M$, 
which changes from line to line. 
On the other hand, it follows from elliptic estimates given in \cite{Iguchi2009, Lannes2013} that 
\[
\|\nabla\Phi\|_{L^2(\Omega)} + \delta^{-1}\|\partial_z\Phi\|_{L^2(\Omega)} \leq C\|\nabla\phi\|_0.
\]
Moreover, by the definition \eqref{C:def-Phiapp} and using~\cite[Lemma~3.4]{Iguchi2018-2} with $k=0$, 
we see that 
\begin{align*}
\|\nabla\widetilde{\Phi}^{\rm app}\|_{L^2(\Omega)}
 + \delta^{-1}\|\partial_z\widetilde{\Phi}^{\rm app}\|_{L^2(\Omega)}
&\leq C\bigl( \|\nabla\widetilde{\phi}_0^\delta\|_0
 + \|(\widetilde{\phi}_1^\delta,\ldots,\widetilde{\phi}_{2N+2}^\delta)\|_1
 + \delta^{-1}\|(\widetilde{\phi}_1^\delta,\ldots,\widetilde{\phi}_{2N+2}^\delta)\|_0 \bigr) \\
&\leq C\bigl( \|\nabla\widetilde{\phi}_0^\delta\|_0
 + \delta^{-1}\|(1-\delta^2\Delta)^{\frac12}
 (\widetilde{\phi}_1^\delta,\ldots,\widetilde{\phi}_{2N+2}^\delta)\|_0 \bigr) \\
&\leq C\|\nabla\phi\|_0. 
\end{align*}
Plugging the above estimates into~\eqref{C:I1}, we obtain
\begin{equation}\label{C:I1.5}
|I_1| \leq
\begin{cases}
 C\|\nabla\phi\|_{4N+3} \|\nabla\phi\|_0 \, \delta^{4N+3} & \text{ in the case (H1)}, \\
 C\|\nabla \phi\|_{4[\frac{N}{2}]+3} \|\nabla\phi\|_0 \, \delta^{4[\frac{N}{2}]+3}  & \text{ in the case (H2)},
\end{cases}
\end{equation}
provided that $m\geq 4(N+1)$ in the case (H1), and $m\geq 4([\frac{N}{2}]+1)$ in the case (H2).

We proceed to evaluate $I_2$ by noticing that, after the calculations in~\cite[p.~2009]{Iguchi2018-2},
\begin{align*}
I_2 &= \frac12\sum_{i=0}^{2N+2}\sum_{j=0}^{2N+2}
 \bigl( L_{ij}^\delta(H,b)\tilde{\phi}_j^\delta,\tilde{\phi}_i^\delta \bigr)_{L^2}
 - \frac12\sum_{i=0}^N\sum_{j=0}^N
 \bigl( L_{ij}^\delta(H,b)\phi_j^\delta,\phi_i^\delta \bigr)_{L^2} \\
&= \frac12\sum_{j=0}^{N}\sum_{i=N+1}^{2N+2} \big( 
 (L_{ij}^\delta(H,b) - H^{p_i}L_{0j}^\delta(H,b))\varphi_j^\delta, \widetilde{\phi}_{i}^\delta \big)_{L^2} \\
&\quad\;
 - \frac12\sum_{j=N+1}^{2N+2}\sum_{i=N+1}^{2N+2} \big( 
  (L_{ij}^\delta(H,b) - H^{p_i}L_{0j}^\delta(H,b))\widetilde{{\phi}}_j^\delta, 
   \widetilde{\phi}_{i}^\delta \big)_{L^2}.
\end{align*}
Therefore, 
\begin{align*}
|I_2| &\leq C \bigl\{ 
 \|\boldsymbol{\varphi}^\delta\|_{2N^*+3}
  + \| (\widetilde{\phi}_{N+1}^\delta,\ldots,\widetilde{\phi}_{2N+2}^\delta) \|_{2N^*+3}  \\
&\quad\;
 + \delta^{-2} \bigl( \|\boldsymbol{\varphi}^\delta\|_{2N^*+1}
  + \| (\widetilde{\phi}_{N+1}^\delta,\ldots,\widetilde{\phi}_{2N+2}^\delta) \|_{2N^*+1}\bigr) \bigr\}
 \| (\widetilde{\phi}_{N+1}^\delta,\ldots,\widetilde{\phi}_{2N+2}^\delta) \|_{-(2N^*+1)}
\end{align*}
with $N^\star=N$ in the case (H1) and $N^\star=[\frac{N}{2}]$ in the case (H2). 
Using~\cite[Lemma~6.2]{Iguchi2018-2} with $(k,j) = (2N+3,N),(2N+1,N+1),(-2N-1,N+1)$ in the case (H1) 
and~\cite[Lemma~6.7]{Iguchi2018-2} with $(k,j) = (2[\frac{N}{2}]+3,[\frac{N}{2}]),
(2[\frac{N}{2}]+1,[\frac{N}{2}]+1),(-2[\frac{N}{2}]-1,[\frac{N}{2}]+1)$ in the case (H2), we obtain 
\begin{equation}\label{C:I2}
|I_2| \leq 
\begin{cases}
 C\|\nabla\phi\|_{4N+2}\|\nabla\phi\|_0\;\delta^{4N+2} & \text{ in the case (H1)}, \\
  C\|\nabla\phi\|_{4[\frac{N}{2}]+2}\|\nabla\phi\|_0\;\delta^{4[\frac{N}{2}]+2}  & \text{ in the case (H2)},
\end{cases}
\end{equation}
provided that $m\geq 4N+3$ in the case (H1), and $m\geq 4[\frac{N}{2}]+3$ in the case (H2). 
Now, \eqref{C:I1.5} and \eqref{C:I2} give the desired estimate. 
\end{proof}


\bigskip
Vincent Duch\^ene \par
{\sc Institut de Recherche Math\'ematique de Rennes} \par
{\sc Univ Rennes}, CNRS, IRMAR -- UMR 6625 \par
{\sc F-35000 Rennes, France} \par
E-mail: \texttt{vincent.duchene@univ-rennes1.fr}

\bigskip
Tatsuo Iguchi \par
{\sc Department of Mathematics} \par
{\sc Faculty of Science and Technology, Keio University} \par
{\sc 3-14-1 Hiyoshi, Kohoku-ku, Yokohama, 223-8522, Japan} \par
E-mail: \texttt{iguchi@math.keio.ac.jp}


\begin{thebibliography}{99}
%
\bibitem{Boussinesq1872}
J. Boussinesq, 
Th\'eorie des ondes et des remous qui se propagent le long d'un canal rectangulaire horizontal, 
en communiquant au liquide contenu dans ce canal des vitesses sensiblement pareilles de la surface au fond, 
J. Math. Pure. Appl., {\bf 17} (1872), 55--108. 
%
\bibitem{Craig2016}
W. Craig,
On the Hamiltonian for water waves,
RIMS K\^oky\^uroku No.2038 (2017), 98--114.
%
\bibitem{CraigGroves1994}
W. Craig and M. D. Groves, 
Hamiltonian long-wave approximations to the water-wave problem, 
Wave Motion {\bf 19} (1994), 367--389. 
%
\bibitem{CraigGroves2000}
W. Craig and M. D. Groves, 
Normal forms for wave motion in fluid interfaces,
Wave Motion {\bf 31} (2000), 21--41.
%
\bibitem{CraigGuyenneKalisch2005}
W. Craig, P. Guyenne, and H. Kalisch,
Hamiltonian long-wave expansions for free surfaces and interfaces, 
Comm. Pure Appl. Math., {\bf 58} (2005), 1587--1641. 
%
\bibitem{CraigGuyenneNichollsSulem2005}
W. Craig, P. Guyenne, D. P. Nicholls, and C. Sulem, 
Hamiltonian long-wave expansions for water waves over a rough bottom, 
Proc. R. Soc. Lond. Ser. A Math. Phys. Eng. Sci., {\bf 461} (2005), 839--873.
%
\bibitem{CraigGuyenneSulem2010}
W. Craig, P. Guyenne, and C. Sulem, 
A Hamiltonian approach to nonlinear modulation of surface water waves, 
Wave Motion {\bf 47} (2010), 552--563.
%
\bibitem{CraigGuyenneSulem2012}
W. Craig, P. Guyenne, and C. Sulem, 
Hamiltonian higher-order nonlinear Schr\"odinger equations for broader-banded waves on deep water, 
Eur. J. Mech. B Fluids {\bf 32} (2012), 22--31. 
%
\bibitem{CraigSulem1993}
W. Craig and C. Sulem, 
Numerical simulation of gravity waves, 
J. Comput. Phys., {\bf 108} (1993), 73--83. 
%
\bibitem{Iguchi2009}
T. Iguchi, 
A shallow water approximation for water waves,
J. Math. Kyoto Univ., {\bf 49} (2009), 13--55.
%
\bibitem{Iguchi2018-1}
T. Iguchi, 
Isobe--Kakinuma model for water waves as a higher order shallow water approximation, 
J. Differential Equations, {\bf 265} (2018), 935--962.
%
\bibitem{Iguchi2018-2}
T. Iguchi, 
A mathematical justification of the Isobe--Kakinuma model for water waves with and without bottom topography, 
J. Math. Fluid Mech., {\bf 20} (2018), 1985--2018. 
%
\bibitem{Isobe1994}
M. Isobe, 
A proposal on a nonlinear gentle slope wave equation, 
Proceedings of Coastal Engineering, 
Japan Society of Civil Engineers, {\bf 41} (1994), 1--5 [Japanese]. 
%
\bibitem{Isobe1994-2}
M. Isobe, 
Time-dependent mild-slope equations for random waves, 
Proceedings of 24th International Conference on Coastal Engineering, ASCE, 285--299, 1994.
%
\bibitem{Kakinuma2000}
T. Kakinuma, 
[title in Japanese], 
Proceedings of Coastal Engineering, 
Japan Society of Civil Engineers, {\bf 47} (2000), 1--5 [Japanese]. 
%
\bibitem{Kakinuma2001}
T. Kakinuma, 
A set of fully nonlinear equations for surface and internal gravity waves, 
Coastal Engineering V: Computer Modelling of Seas and Coastal Regions, 
225--234, WIT Press, 2001. 
%
\bibitem{Kakinuma2003}
T. Kakinuma, 
A nonlinear numerical model for surface and internal waves shoaling on a permeable beach, 
Coastal engineering VI: Computer Modelling and Experimental Measurements of Seas and Coastal Regions, 
227--236, WIT Press, 2003.
%
\bibitem{Lannes2013}
D. Lannes, 
The water waves problem. Mathematical analysis and asymptotics.
In: Mathematical Surveys and Monographs, vol. 188. American Mathematical Society (2013).
%
\bibitem{Luke1967}
J. C. Luke, 
A variational principle for a fluid with a free surface, 
J. Fluid Mech., {\bf 27} (1967), 395--397. 
%
\bibitem{MeiLeMehaute1966}
C. C. Mei and B. Le M\'ehaut\'e,
Note on the equations of long waves over an uneven bottom,
J. Geophys. Res., {\bf 71} (1966), 393--400.
%
\bibitem{Miles1977}
J. W. Miles, 
On Hamilton's principle for surface waves, 
J. Fluid Mech., {\bf 83} (1977), 153--158. 
%
\bibitem{MurakamiIguchi2015}
Y. Murakami and T. Iguchi, 
Solvability of the initial value problem to a model system for water waves, 
Kodai Math. J., {\bf 38} (2015), 470--491.
%
\bibitem{NemotoIguchi2018}
R. Nemoto and T. Iguchi, 
Solvability of the initial value problem to the Isobe--Kakinuma model for water waves, 
J. Math. Fluid Mech., {\bf 20} (2018), 631--653. 
%
\bibitem{Zakharov1968}
V. E. Zakharov,
Stability of periodic waves of finite amplitude on the surface of a deep fluid, 
J. Appl. Mech. Tech. Phys., {\bf 9} (1968), 190--194. 
%
\end{thebibliography}
\end{document}